\title{Capturing nonclassical shocks in nonlinear elastodynamic with a conservative finite volume scheme}
\author{Nina Aguillon\thanks{nina.aguillon@math.u-psud.fr, Université Aix Marseille. This work has been carried out in the framework of the Labex Archimède (ANR-11-LABX-0033) and of the A*MIDEX project (ANR-11-IDEX-0001-02), funded by the ``Investissements d'Avenir" French Government programme managed by the French National Research Agency (ANR). The authors also thanks the Université Paris Sud where this work began.}}
\newtheorem{thm}{Theorem}[section]
\newtheorem{prop}[thm]{Proposition}
\theoremstyle{remark}
\newtheorem{rem}[thm]{Remark}
\theoremstyle{definition}
\newtheorem{defi}[thm]{Definition}
\newcommand{\N}{\mathbb{N}}
\newcommand{\R}{\mathbb{R}}
\newcommand{\Z}{\mathbb{Z}}
\newcommand{\eps}{\epsilon}
\begin{document}
\maketitle

\begin{abstract}
For a model of nonlinear elastodynamics, we construct a finite volume scheme which is able to capture nonclassical shocks (also called undercompressive shocks). Those shocks verify an entropy inequality but are not admissible in the sense of Liu. They verify a kinetic relation which describes the jump, and keeps an information on the equilibrium between a vanishing dispersion and a vanishing diffusion. The scheme presented here is by construction exact when the initial data is an isolated nonclassical shock. In general, it does not introduce any diffusion near shocks, and hence nonclassical solutions are correctly approximated. The method is fully conservative and does not use any shock-tracking mesh. This approach is tested and validated on several test cases. In particular, as the nonclassical shocks are not diffused at all, it is possible to obtain large time asymptotics.
\end{abstract}

\bigskip

\noindent  \textbf{Key words and phrases:} nonclassical shocks, undercompressive shocks, kinetic relation, finite volume schemes

\bigskip
\noindent  \textbf{2010 Mathematics Subject Classification:} 65M08, 76M12, 35L65, 35L67

\section*{Introduction}
Hyperbolic systems of conservation laws often arise as limits of systems of partial differential equations including small scales effects, like diffusion and dispersion, when the parameters driving the small scales effects vanish. For example, the compressible Euler equations are the (formal) limit of the compressible Navier--Stokes equations, when the diffusion parameter tends to zero. However, the solutions of the hyperbolic system do not inherit all the properties of the augmented system, and it particular they are not always smooth. More importantly, in the class of weak solutions, the Cauchy problem for the hyperbolic system may admit infinitely many solutions, plenty of them being non relevant toward the augmented system. Thus it is necessary to add some criterions that select the solutions that are indeed limits, as the parameters vanish, of the solutions of the  augmented diffusive-dispersive system. One of this criterion yields to the selection of \emph{classical solutions}, which roughly speaking corresponds to the case where diffusion is dominant. But when two small scale effects are of comparable strength (say diffusion and dispersion) \emph{nonclassical solutions} appear. They contain shocks that do not verify the classical criterion but do arise as limits of the diffusive-dispersive system.

Let us sketch a general framework for nonclassical solutions of hyperbolic systems. Consider the Cauchy problem for a system of conservation laws
\begin{equation} \label{eq:hypsys} 
\begin{cases}
 \partial_{t} U(t,x) + \partial_{x} f(U(t,x)) =0, \\
 U(t=0,x)=U^{0}(x),
\end{cases}
\end{equation}
where $t$ is positive, $x$ belongs to $\R$, $U$ is the vector regrouping the $N \geq 1$ unknowns belonging to a open convex set $\Omega$ of $\R^{N}$, and $f= \R^{N} \rightarrow \R^{N}$ is the flux function, which we assume to be smooth. We suppose that System~\eqref{eq:hypsys} is strictly hyperbolic, i.e. that the Jacobian matrix of $f$ is diagonalizable with simple eigenvalues $\lambda_{1}(U)< \cdots < \lambda_{n}(U)$. We denote by $r_{i}(U)$ a right eigenvector associated to the eigenvalue~$\lambda_{i}(U)$.

Solutions of hyperbolic systems are in general not smooth, even when the initial data is smooth. Discontinuities appear in finite time and it is necessary to consider weak solutions. Doing so, uniqueness is lost, and an additional criterion has to be imposed to select a unique solution. For example, the vector of unknown $U$ is asked to verify, in addition to the initial System~\eqref{eq:hypsys}, a so called entropy inequality
\begin{equation} \label{eq:entropyineq}
 \partial_{t} \mathcal{U}(U) + \partial_{x} \mathcal{W}(U) \leq 0.
\end{equation}
 The entropy $\mathcal{U}$ and the entropy flux $\mathcal{W}$ usually come from considerations on an augmented version of the system under study, where small physical terms like diffusion and dispersion are not neglected. 
 Such an augmented system has additional properties, for example the total energy is conserved. Inequality~\eqref{eq:entropyineq} states that at the limit, this energy is dissipated. In general, from the augmented system only one entropy inequality is deduced, thus the solutions of~\eqref{eq:hypsys} are not expected to verify other entropy inequalities than~\eqref{eq:entropyineq}.  When the system is genuinely nonlinear, i.e. when 
\begin{equation} \label{eq:GNL}
 \forall i \in \{1, \cdots, n\} , \quad \forall U \in \Omega, \quad \nabla \lambda_{i}(U) \cdot r_{i}(U) \neq 0,
\end{equation}
 and when the entropy $\mathcal{U}$ is stricly convex, the entropy inequality~\eqref{eq:entropyineq} contains enough information to select only one weak solution of~\eqref{eq:hypsys}. In the sequel we suppose that the entropy $\mathcal{U}$ is strictly convex. In  the general case where~\eqref{eq:GNL} does not hold, the addition of a single entropy inequality is not sufficient to select a unique weak solution. 
 
 The question of uniqueness of the weak solution is strongly related to the choice of a criterion to decide wether a shock is admissible or not. Two states $U_{L}$ and $U_{R}$ of $\Omega$ are linked by a shock if there exists a real $s$, called the speed of the shock, so that the Rankine--Hugoniot relations
 \begin{equation} \label{eq:RH}
 s(U_{L}-U_{R})= f(U_{L})-f(U_{R})
\end{equation}
hold. The shock is entropy satisfying if it verifies~\eqref{eq:entropyineq} in a weak sense, i.e. if 
\begin{equation} \label{eq:dissipation}
s (\mathcal{U}(U_{L})-\mathcal{U}(U_{R})) -(\mathcal{W}(U_{L})-\mathcal{W}(U_{R}))   \leq 0.
\end{equation}

When the system is genuinely nonlinear (i.e. when~\eqref{eq:GNL} holds),~\eqref{eq:dissipation} is equivalent to the Lax inequalities~\cite{Lax57}
\begin{equation} \label{eq:LaxCrit}
\exists i \in \{1, \cdots, n\}: \, \lambda_{i}(U_{R}) < s < \lambda_{i}(U_{L}) , \, s<\lambda_{i+1}(U_{R}) \ \text{ and } \  s>\lambda_{i-1}(U_{L}),
\end{equation}
which themselves are an extension to systems ($N>1$) of Oleinik's criterion~\cite{O57}.

In~\cite{Liu74}, Liu generalized this criterion to the non genuinely nonlinear case where $\nabla \lambda_{i} \cdot r_{i}$ may vanish.  Liu proved that his criterion selects a unique selfsimilar solution to the Riemann problem~(\ref{eq:hypsys}--\ref{eq:entropyineq}). We recall that a Riemann problem is the case of a piecewise constant initial data
$$ U^{0}(x)= U_{L} \mathbf{1}_{x<0} + U_{R} \mathbf{1}_{x>0}, \qquad (U_{L},U_{R}) \in \Omega^{2}. $$
 For systems that are not genuinely nonlinear, the Liu criterion is stronger than~\eqref{eq:dissipation} (whereas it is equivalent for genuinely nonlinear systems). An admissible shock in the sense Liu always verifies~\eqref{eq:dissipation}, but some shocks may verify~\eqref{eq:dissipation} without satisfying the Liu criterion. These shocks are called nonclassical. Moreover, unlike in the genuinely nonlinear case, the addition of the single entropy inequality~\eqref{eq:entropyineq} is not enough to regain uniqueness of the weak solution, because too many shocks are allowed. In other words, the inequality entropy~\eqref{eq:entropyineq} does not contain enough information on the small scale effects in the augmented system.  Uniqueness can be regain by strongly constraining the nonclassical shocks to verify an algebraic relation of the form
%
$$ U_{L}= \Phi^{\flat}(U_{R}). $$
The function $\Phi^{\flat}$ is called a \emph{kinetic relation}. 

It has been proved that the addition of such a kinetic relation selects a unique weak solution to the Riemann problem for several models, including models of phase transitions in solids in~\cite{AK91}, \cite{LT01} and~\cite{LT02} or in liquids~\cite{LT00}, \cite{HL00},~\cite{SY95} and a model of magnetohydrodynamics~\cite{HL00}. The link between kinetic relations and augmented diffusive dispersive equations is explored in~\cite{JKS95},~\cite{LeFloch} and~\cite{HL97} in the scalar case and in~\cite{S89} for gas dynamics with a Van der Waals pressure law. The literature on the subject is so wide that we only cited a few key references. The interested reader will find a more comprehensive bibliography on this topic in~\cite{LeFloch}.

The numerical approximation of nonclassical solutions is challenging, because they are by nature very sensitive to the equilibrium between the small scale effects. The stake is to preserve this equilibrium at the numerical level.  Roughly speaking, usual finite volume schemes introduce a numerical viscosity which destroys the equilibrium between the small scales effects. The diffusion becomes dominant and the schemes converge toward the classical solution, even though they are based on nonclassical Riemann solvers.

To overcome this difficulty, two types of schemes have been proposed. On the one hand, it is possible to use high order schemes consistent with the augmented system (see for example \cite{HL98}, \cite{LMR02}, \cite{LR00} and~\cite{KR10}). As outlined in~\cite{HL98}, it is necessary to discretize the flux of the hyperbolic part with a high enough order, once again to keep the exact balance between the small scales. On the other hand, some schemes relies on the tracking of nonclassical shocks (see for example~\cite{CL03}, \cite{CG08}, \cite{BCLL08}, \cite{P11}, \cite{CCER12} and~\cite{CDMG} ). In that case the kinetic relation is taken into account the scheme, typically through the use of an exact nonclassical Riemann solver. 
 
The conservative finite volume scheme presented here  belongs to this last category. It is built to be exact when the initial data is  a nonclassical shock. In general, it does not introduce any numerical diffusion near nonclassical shocks (a particular class of nonclassical solutions) and turns out to capture correctly nonclassical solutions.  This scheme extends to hyperbolic systems the discontinuous reconstruction scheme introduced in the scalar case in~\cite{BCLL08} and recently used in~\cite{CDMG}. 

In the first section, we present the hyperbolic system admitting nonclassical solution for which we construct the scheme, namely the model of nonlinear elasticity studied in~\cite{LT01} and~\cite{HL00}. The second section is devoted to the construction of the scheme itself. We present a way to select cells in which a nonclassical shock is reconstructed, explain how this shock is reconstructed and give the numerical flux. Eventually, we prove that the scheme is exact when the initial data is a nonclassical shock. In the third and last section, we propose several test cases involving nonclassical shocks. It appears that the proposed scheme capture nonclassical shocks very sharply. Let us outline that the scheme is fully conservative (unlike Glimm type schemes~\cite{CL03}) and uses a fixed grid, which allows the computation of solutions containing several interacting nonclassical shocks.

\textbf{Acknowledgment} The author warmly thanks Frédéric Lagoutière for his support and advice.
 
\section{The Riemann problem for a nonlinear elasticity model}
This paper is devoted to the numerical approximation of the solutions of the system of conservation laws
\begin{equation} \label{eq:NLelasticity} 
\begin{cases}
 \partial_{t} v - \partial_{x} \sigma(w) = 0, \\
 \partial_{t} w - \partial_{x} v = 0, \\
 v(t=0,x)=v^{0}(x), \\
 w(t=0,x)=w^{0}(x),
\end{cases}
\end{equation}
where the stress $\sigma$ is twice differentiable, and verifies
\begin{equation} \label{eq:propstress}
 w \sigma''(w) >0, \, \sigma'(w)>0 \ \text{ and } \ \ \lim_{|w| \rightarrow + \infty} \sigma'(w)= + \infty.
\end{equation}
We are interested in weak solutions of~\eqref{eq:NLelasticity} that are morally limits when $\eps$ tends to $0^{+}$ of the augmented system
\begin{equation} \label{eq:ANLelasticity} 
\begin{cases}
 \partial_{t} v^{\eps} - \partial_{x} \sigma(w^{\eps}) = \eps \partial_{xx} v + \alpha \eps^{2} \partial_{xxx} w, \\
 \partial_{t} w - \partial_{x} v = 0. 
\end{cases}
\end{equation}
The parameter $\alpha$ is positive. From~\eqref{eq:ANLelasticity} we deduce the following entropy inequality for System~\eqref{eq:NLelasticity} (see~\cite{LeFloch}):
\begin{equation} \label{eq:SEI} 
\partial_{t} \left( \frac{v^{2}}{2} + \int_{0}^{w} \sigma(z) dz \right) + \partial_{x} \left( -v \sigma(w) \right) \leq 0.
\end{equation}

In this section we briefly recall the results of Thanh and LeFloch~\cite{LT01} on the Riemann problem for System~\eqref{eq:NLelasticity}, i.e. the case where
\begin{equation} \label{eq:RPElasto}
 \begin{cases}
 v^{0}(x)  &= v_{L} \mathbf{1}_{x<0} + v_{R} \mathbf{1}_{x>0}, \\
 w^{0}(x) &= w_{L} \mathbf{1}_{x<0} + w_{R} \mathbf{1}_{x>0} .
\end{cases}
\end{equation}
 We adopt the notation of this paper. It is recalled on Figure~\ref{F:Notations}.
 
\begin{psfrags}
\psfrag{f}{$w \mapsto \sigma(w)$}
\psfrag{w}{$w$}
\psfrag{w'}{$w'$}
\psfrag{0}{$0$}
\psfrag{Phi1}{$\Phi^{\natural}(w)$}
\psfrag{Phi2}{$\Phi^{\flat}_{\infty}(w)$}
\psfrag{Phi3}{$\Phi^{-\natural}(w)$}
\begin{figure} 
\centering
 \includegraphics[width=0.8 \linewidth, clip=true, trim=3cm 1cm 3cm 1cm]{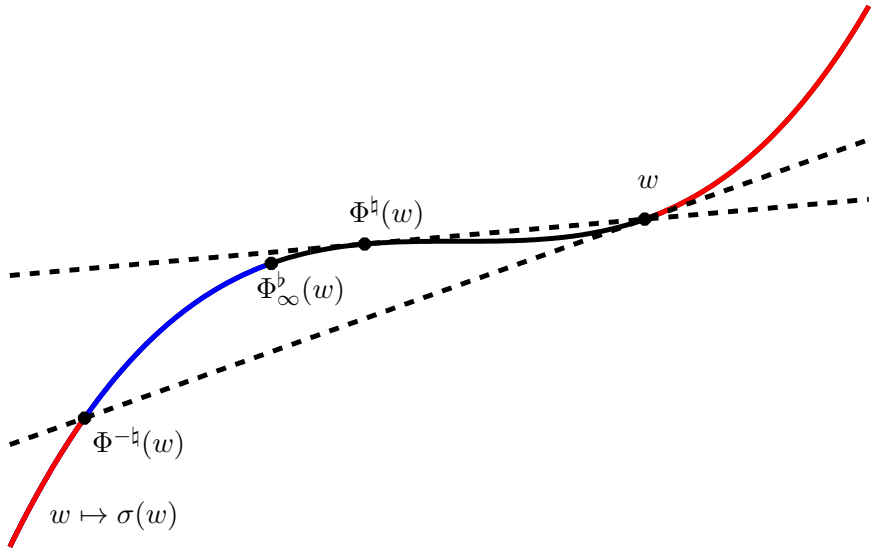}
 \caption{Notations used for solving the Riemann problem. $\Phi^{-\natural}(w)$ is the first Liu shock reachable from $w$. The entropy dissipation between $w$ and $\Phi_{\infty}^{\flat}$ is zero. The states in red can be linked to $w$ by a Liu shock, the blue ones by a nonclassical shock.} \label{F:Notations}
\end{figure}
\end{psfrags}

If the  left state $(v_{L},w_{L})$ and the right state $(v_{R},w_{R})$ are linked by a $1$-shock, its speed is equal to $-s(w_{L},w_{R})$, where
\begin{equation} \label{eq:shockspeed}
 s(w_{L}, w_{R}) =  \sqrt{\frac{\sigma(w_{L})-\sigma(w_{R})}{w_{L}-w_{R}}},
\end{equation}
and $v_{R}$ is given by
\begin{equation} \label{eq:H1}
 v_{R}= v_{L} +s(w_{L},w_{R}) (w_{R}-w_{L}):= \mathcal{H}_{1}(w_{R},v_{L},w_{L}). 
\end{equation}
If they are linked by a $2$-shocks, its speed is $+s(w_{L},w_{R})$, and 
$$ v_{L}= v_{R} -s(w_{L},w_{R}) (w_{L}-w_{R}):= \mathcal{H}_{2}(w_{L},v_{R},w_{R}). $$
Graphically, the speed of a shock $s(w_{L},w_{R})$ corresponds to the slope of the segment joining $(w_{L}, \sigma(w_{L}))$ to $(w_{R}, \sigma(w_{R}))$.

Let us now focus on when a $1$-shock is admissible or not, either in the sense of the entropy dissipation~\eqref{eq:SEI} or in the sense of Liu. The results for $2$-shocks are the same, with the roles of $w_{L}$ and $w_{R}$ reversed in what follows. A careful study of Equation~\eqref{eq:SEI} applied to the shock yields that it verifies the entropy inequality if and only if
$$w_{L}^{2} \leq w_{L} w_{R} \quad \text{ or } \quad w_{R} w_{L} \leq \Phi^{\flat}_{\infty}(w_{L}) w_{L},$$
where $\Phi^{\flat}_{\infty}(w_{L})$ is the real having the opposite sign than $w_{L}$ for which the entropy dissipation~\eqref{eq:dissipation} vanishes. The function $\Phi_{\infty}^{\flat}$ is its own inverse.

For System~\eqref{eq:NLelasticity}, a shock verifies the Liu criterion if and only if for all $w$ between $w_{L}$ and $w_{R}$,
$$ \frac{\sigma(w)- \sigma(w_{R})}{w-w_{R}} \geq \frac{\sigma(w_{L})- \sigma(w_{R})}{w_{L}-w_{R}}. $$
Geometrically, it means that the segment joining $(w_{L},\sigma(w_{L}))$ to $(w_{R},\sigma(w_{R}))$ is above the graph of $\sigma$ if $w_{R}>w_{L}$, and below if $w_{R}<w_{L}$, see Figure~\ref{F:Notations}. It follows that there exists an invertible function $\Phi^{\natural}$ such that a shock verifies the Liu criterion if and only if 
$$w_{L}^{2} \leq w_{L} w_{R} \quad \text{ or } \quad w_{R} w_{L} \leq \Phi^{-\natural}(w_{L}) w_{L}.$$
The fact that System~\eqref{eq:NLelasticity} is not genuinely nonlinear yields 
$$w_{L} \Phi^{-\natural}(w_{L}) < w_{L} \Phi^{\flat}_{\infty}(w_{L}).$$ 
 Thus, if $w_{R}$ lies in between $\Phi^{\flat}_{\infty}(w_{L})$ and $\Phi^{-\natural}(w_{L})$, the shock between $(v_{L}, w_{L})$ and  $(v_{R}, w_{R})$ dissipates the entropy~\eqref{eq:SEI} but is not admissible in the sense of Liu.  Such shocks are called \emph{nonclassical shocks}.

Many Riemann problems for System~(\ref{eq:NLelasticity}-\ref{eq:SEI}) admit an infinity of solutions (see~\cite{LT01}, Section $3$). Uniqueness can be obtained by imposing a \emph{kinetic relation} that strongly constrain the nonclassical shocks by imposing
 \begin{equation}\label{eq:KR}
 \begin{cases}
 w_{L}= \Phi^{\flat, 1}(w_{R}) & \text{for a $1$-nonclassical shock}, \\
 w_{R}= \Phi^{\flat, 2}(w_{L}) & \text{for a $2$-nonclassical shock},
\end{cases}
\end{equation}
where $\Phi^{\flat, 1}$ et $\Phi^{\flat, 2}$ both verify
$$ w \Phi^{\flat}_{\infty}(w)  \leq w \Phi^{\flat, i}(w) \leq w \Phi^{\natural}(w) $$
but are not necessarily equal. We now state the main result of~\cite{LT01}.

\begin{thm} \label{thm:TL}[Thanh, LeFloch] If the kinetic functions $\Phi^{\flat,1}$ and $\Phi^{\flat,2}$ are monotone decreasing, the Riemann Problem~(\ref{eq:NLelasticity}-\ref{eq:SEI}-\ref{eq:KR}) has a unique selfsimilar solution.
 \end{thm}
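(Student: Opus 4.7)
My plan is the classical wave-curve strategy for a $2\times 2$ Riemann problem: build a forward 1-wave curve from the left state $(v_L,w_L)$ and a backward 2-wave curve from the right state $(v_R,w_R)$, both expressed in the $(w,v)$-plane, and show they intersect in exactly one point $(v_m,w_m)$. This intermediate state then forces a unique self-similar solution, the left part being a 1-wave from $(v_L,w_L)$ to $(v_m,w_m)$ and the right part a 2-wave from $(v_m,w_m)$ to $(v_R,w_R)$, each of which is unambiguously a rarefaction, a Liu shock, or a nonclassical shock followed by a classical wave.

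The first step is to parametrize the 1-wave curve by the endpoint $w$. On the branch where only classical waves are needed (depending on the sign of $w_L$ relative to the inflection point of $\sigma$), the state at $w$ is reached either by a 1-rarefaction, which gives $v_L-v=\int_{w_L}^{w}\sqrt{\sigma'(z)}\,dz$, or by an admissible Liu 1-shock, in which case~\eqref{eq:H1} gives $v$ explicitly in terms of $w$ and $w_L$. On the branch where nonclassical shocks occur, the kinetic relation~\eqref{eq:KR} designates a single nonclassical jump from $w_L$ to $\Phi^{\flat,1}(w_L)$, followed by either a rarefaction or a Liu shock emanating from $\Phi^{\flat,1}(w_L)$; by~\eqref{eq:H1} this again determines $v$ as a function of $w$. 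Concatenating these pieces yields a continuous, piecewise smooth curve $v=V_L(w)$. The analogous construction applied to the 2-family produces $v=V_R(w)$.

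The heart of the argument, and the main obstacle, is showing that $V_L$ is strictly monotone (and $V_R$ strictly monotone with the opposite sign of slope). On purely classical pieces this follows from $\sigma'>0$ together with~\eqref{eq:shockspeed} and~\eqref{eq:H1}. The delicate part is the composite branch: as the terminal state $w$ varies, one must track the jump $s(w',\Phi^{\flat,1}(w'))(\Phi^{\flat,1}(w')-w')$ accumulated across the nonclassical shock together with the subsequent classical wave from $\Phi^{\flat,1}(w')$ to $w$. Here the hypothesis that $\Phi^{\flat,1}$ is \emph{monotone decreasing} is exactly what is needed: it ensures that the base point of the classical continuation moves in the direction opposite to $w$, and a short computation using~\eqref{eq:propstress} shows that the two contributions to $dV_L/dw$ along the composite branch have the same sign, so no folding of the wave curve occurs. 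The same monotonicity argument, applied to $\Phi^{\flat,2}$, handles $V_R$.

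Once $V_L$ and $V_R$ are known to be continuous and strictly monotone with opposite slopes, their difference $V_L-V_R$ is strictly monotone, and the growth condition $\lim_{|w|\to+\infty}\sigma'(w)=+\infty$ in~\eqref{eq:propstress} forces $V_L(w)-V_R(w)\to\pm\infty$ as $w\to\pm\infty$. The equation $V_L(w)=V_R(w)$ therefore has exactly one solution $w_m$, which yields a unique intermediate state $(v_m,w_m)$. Reading off the corresponding pieces on each wave curve gives, for each of the 1- and 2-families, a uniquely determined elementary wave (rarefaction, Liu shock) or composite (kinetic nonclassical shock followed by a classical wave), and these assemble into the unique self-similar solution of~(\ref{eq:NLelasticity}--\ref{eq:SEI}--\ref{eq:KR}).
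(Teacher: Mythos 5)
First, a point of reference: the paper does not prove this theorem --- it is imported verbatim from \cite{LT01} (``We now state the main result of \cite{LT01}''), so the comparison is against the construction in that reference. Your overall strategy (forward $1$-wave curve, backward $2$-wave curve, monotonicity, intersection) is indeed the LeFloch--Thanh approach, and the key lemma you need --- that $v$ is monotone along the forward $1$-wave curve --- is exactly Theorem $4.5$ of \cite{LT01}, which this paper quotes later in the proof of Theorem \ref{TExa}. So the skeleton is right, but two steps as written would fail.

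The first is structural: your composite branch is assembled in the wrong order and applies the kinetic function to the wrong state. The kinetic relation \eqref{eq:KR} for a $1$-nonclassical shock reads $w_{L}=\Phi^{\flat,1}(w_{R})$, i.e.\ it constrains the \emph{left} state of the shock in terms of its \emph{right} state. Consequently (see Proposition \ref{p:FL1}) the nonclassical shock is the wave adjacent to the intermediate state $(v_{M},w_{M})$: its left state is $\Phi^{\flat,1}(w_{M})$ and therefore moves as the terminal point $w_{M}$ of the wave curve varies, while the \emph{classical} wave (rarefaction or Liu shock) comes first, joining $(v_{L},w_{L})$ to $\bigl(\mathcal{H}_{1}(\Phi^{\flat,1}(w_{M}),\cdot,\cdot),\Phi^{\flat,1}(w_{M})\bigr)$. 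Your version --- a nonclassical jump from $w_{L}$ to the \emph{fixed} state $\Phi^{\flat,1}(w_{L})$ followed by a classical continuation --- is not consistent with \eqref{eq:KR}, puts the waves in an order incompatible with the speed ordering of a self-similar fan, and parametrizes a different (and incorrect) curve. The second gap is that the ``short computation'' on the composite branch is the entire content of the theorem. Monotonicity of $\Phi^{\flat,1}$ alone does not give the sign of $dV_{L}/dw$ there: one also needs the pinching $w\,\Phi^{\flat}_{\infty}(w)\leq w\,\Phi^{\flat,1}(w)\leq w\,\Phi^{\natural}(w)$ and the speed comparison of Proposition \ref{p:PhiDiese}, which governs the threshold $\Phi^{\sharp}$ at which the composite degenerates into a single crossing classical shock; continuity and monotonicity of the wave curve at that junction (and at $w_{M}=\Phi^{-\flat,1}(w_{L})$) is precisely where nonclassical wave curves can fold or jump, and it is asserted rather than proved. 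As written, the argument assumes the conclusion at its most delicate point.
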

The exact Riemann solver associated to Theorem~\ref{thm:TL} is the foundation of the scheme presented in the next section. Moreover, we will use the inner structure of the Riemann solution. Thus for the sake of completeness, we describe below the forward $1$-wave and the backward $2$-wave. 
\begin{prop}\label{p:FL1}[Thanh, LeFloch] Let $(v_{L},w_{L})$ be a fixed left state, and $(v_{M},w_{M})$ be a state linked to $(v_{L},w_{L})$ by a $1$-wave. Then the nature of this $1$-wave is the following:
\begin{itemize}
 \item if $w_{L} w_{M} > w_{L}^{2}$, it is a classical shock;
 \item if $0 \leq w_{L} w_{M} \leq w_{L}^{2}$, it is a rarefaction wave;
 \item if $w_{L} \varphi^{-\flat, 1}(w_{L}) < w_{L} w_{M} \leq 0$, it first contains a $1$-rarefaction linking $(v_{L},w_{L})$ to the state $(\mathcal{H}_{1}(\varphi^{\flat, 1}(w), v_{M}, w_{M}), \varphi^{\flat, 1}(w_{M}))$, which is itself linked to $(v_{M},w_{M})$ by a $1$-nonclassical shock;
 \item if $w_{L} w_{M}< w_{L} \varphi^{-\flat, 1}(w_{L})$, two cases arise:
 \begin{itemize}
 \item if $-s(w_{L}, \varphi^{\flat, 1}(w_{M}))<-s(\varphi^{\flat, 1}(w_{M})),w_{M})$, it first contains a $1$-classical shock linking $(v_{L},w_{L})$ to the state $(\mathcal{H}_{1}(\varphi^{\flat, 1}(w), v_{M}, w_{M}), \varphi^{\flat, 1}(w_{M}))$, which is itself linked to $(v_{M},w_{M})$ by a $1$-nonclassical shock;
 \item otherwise, it is just a $1$-classical shock (in which $w$ changes sign).
\end{itemize}
\end{itemize}
 \end{prop}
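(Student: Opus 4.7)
The plan is to build the forward 1-wave as a concatenation of elementary waves (rarefactions, Lax-admissible classical shocks, and nonclassical shocks selected by the kinetic relation $\varphi^{\flat,1}$), ordered by nondecreasing propagation speed. The 1-characteristic speed for System~\eqref{eq:NLelasticity} is $\lambda_{1}(w) = -\sqrt{\sigma'(w)}$; by~\eqref{eq:propstress} the function $\sigma'$ attains its minimum at $w=0$ and is strictly increasing in $|w|$, so genuine nonlinearity of the 1-field fails precisely at $w=0$, and the structure of the wave fan depends on whether $w_M$ crosses this inflection point.

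Without loss of generality I would fix $w_L > 0$ and analyse the successive ranges of $w_M$. If $w_M > w_L$, convexity of $\sigma$ on $\mathbb{R}^{+}$ puts the chord from $(w_L,\sigma(w_L))$ to $(w_M,\sigma(w_M))$ above the graph of $\sigma$; the Liu criterion then holds and the 1-wave is a single classical shock, with $v_M = \mathcal{H}_{1}(w_M,v_L,w_L)$ given by~\eqref{eq:H1}. If $0 \leq w_M \leq w_L$, monotonicity of $\sigma'$ on $\mathbb{R}^{+}$ gives $\lambda_{1}(w_L) \leq \lambda_{1}(w_M)$, hence a single 1-rarefaction connects the two states, and the speed ordering rules out any concatenation involving a nonclassical shock.

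In the range $\varphi^{-\flat,1}(w_L) < w_M \leq 0$ the kinetic relation~\eqref{eq:KR} forces the presence of a nonclassical shock. Setting $w' := \varphi^{\flat,1}(w_M)$, the sign rule on $\varphi^{\flat,1}$ gives $w' \geq 0$, while strict monotonicity of $\varphi^{\flat,1}$ combined with $w_M > \varphi^{-\flat,1}(w_L)$ yields $w' \leq w_L$. A 1-rarefaction then connects $w_L$ to $w'$, followed by a 1-nonclassical shock from $w'$ to $w_M$; admissibility of the concatenation reduces to checking $-\sqrt{\sigma'(w')} \leq -s(w',w_M)$, which is a direct consequence of the chord-slope interpretation of shock speeds and the convex/concave geometry of $\sigma$ on either side of the inflection point.

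Finally, when $w_M < \varphi^{-\flat,1}(w_L)$ the decreasing monotonicity of $\varphi^{\flat,1}$ now gives $w' := \varphi^{\flat,1}(w_M) > w_L$, so the leading rarefaction is no longer available. The candidate wave is a classical 1-shock from $w_L$ to $w'$ followed by a nonclassical 1-shock from $w'$ to $w_M$; since $w_L, w' > 0$ the first shock is Liu-admissible by convexity, and the concatenation is speed-admissible exactly when $-s(w_L,w') < -s(w',w_M)$, as stated. In the opposite speed ordering the intermediate state is discarded, and the only remaining admissible self-similar wave is a single classical 1-shock from $w_L$ straight to $w_M$ across $w=0$, whose Liu admissibility is checked by a chord/graph comparison on the whole interval between $w_M$ and $w_L$. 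The main obstacle throughout is \emph{uniqueness}: showing that the listed wave is the only one compatible with both the kinetic relation and the nondecreasing-speed requirement; this bookkeeping rests on the strict monotonicity of $\varphi^{\flat,1}$ and on the chord-slope picture on the graph of $\sigma$, and ultimately follows the detailed analysis carried out in~\cite{LT01}.
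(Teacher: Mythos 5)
The paper does not actually prove this proposition: it is quoted verbatim from Thanh and LeFloch and justified only by the citation to~\cite{LT01}, so there is no in-paper argument to compare yours against. Judged on its own, your sketch has the right skeleton --- the standard construction of the forward $1$-wave curve as a speed-ordered fan of rarefactions, Liu shocks and kinetically selected nonclassical shocks, organised by the position of $w_{M}$ relative to the inflection point $w=0$ and to $\varphi^{-\flat,1}(w_{L})$ --- and the first two cases ($w_{L}w_{M}>w_{L}^{2}$ and $0\le w_{L}w_{M}\le w_{L}^{2}$) are handled correctly by the chord/graph and monotonicity arguments you give.

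Two steps, however, are asserted rather than proved. First, in the range $\varphi^{-\flat,1}(w_{L})<w_{M}\le 0$ you claim the speed compatibility $-\sqrt{\sigma'(w')}\le -s(w',w_{M})$, with $w'=\varphi^{\flat,1}(w_{M})$, is ``a direct consequence of the chord-slope interpretation and the convex/concave geometry of $\sigma$''. It is not: for an arbitrary $w'$ of the opposite sign to $w_{M}$ this inequality can fail (for the cubic stress it is equivalent to $2w'+w_{M}\ge 0$ when $w_{M}\le 0\le w'$). What saves it is the constraint $w\,\Phi^{\flat}_{\infty}(w)\le w\,\varphi^{\flat,1}(w)\le w\,\Phi^{\natural}(w)$ imposed on the kinetic function, i.e.\ precisely the fact that the selected shock is undercompressive, so the first Lax inequality at its left state fails in the right direction. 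That constraint must be invoked explicitly. Second, in the last case the ``otherwise'' branch requires showing that when the two shock speeds are wrongly ordered the single shock from $w_{L}$ across $w=0$ to $w_{M}$ is itself Liu-admissible (chord below the graph on the whole interval); this amounts to comparing the speed-ordering threshold $\varphi^{\sharp}$ with $\Phi^{-\natural}$ and is not an automatic chord/graph observation. Together with the uniqueness/exhaustiveness bookkeeping, which you explicitly defer to~\cite{LT01}, these gaps mean your text is a correct outline of the Thanh--LeFloch construction rather than a self-contained proof --- which is arguably all the paper itself intends for this recalled result.
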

 
 \begin{prop} \label{p:BL2}[Thanh, LeFloch] Let $(v_{R},w_{R})$ be a fixed right state, and $(v_{M},w_{M})$ be a state such that $(v_{M},w_{M})$ and $(v_{R},w_{R})$ are linked by a $2$-wave. Then the nature of this $2$-wave is the following:
\begin{itemize}
 \item if $w_{R} w_{M} > w_{R}^{2}$, it is a classical shock;
 \item if $0 \leq w_{R} w_{M} \leq w_{R}^{2}$, it is a rarefaction wave;
 \item if $w_{R} \varphi^{-\flat, 2}(w_{R}) < w_{R} w_{M} \leq 0$, it first contains a $2$-nonclassical shock linking $(v_{M},w_{M})$ to the state $(\mathcal{H}_{2}(\varphi^{\flat, 2}(w), v_{M}, w_{M}), \varphi^{\flat, 1}(w_{M}))$, which is itself linked to $(v_{R},w_{R})$ by a $2$-rarefaction wave;
 \item if $w_{R} w_{M}< w_{R} \varphi^{-\flat, 2}(w_{R})$, two cases arise:
 \begin{itemize}
 \item if $s(w_{M}, \varphi^{\flat, 2}(w_{M}))<s(\varphi^{\flat, 2}(w_{M})),w_{R})$, it first contains a $2$-nonclassical shock linking $(v_{M},w_{M})$ to the state $(\mathcal{H}_{2}(\varphi^{\flat, 2}(w), v_{M}, w_{M}), \varphi^{\flat, 1}(w_{M}))$, which is itself linked to $(v_{R},w_{R})$ by a $2$-classical shock;
 \item otherwise, it is just a $2$-classical shock (in which $w$ changes sign).
\end{itemize}
\end{itemize}
 \end{prop}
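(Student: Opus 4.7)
The plan is to invoke the reflection symmetry of System~\eqref{eq:NLelasticity} and reduce to Proposition~\ref{p:FL1}. Under the transformation $(v(t,x), w(t,x)) \mapsto (-v(t,-x), w(t,-x))$, the system is preserved: the sign flips on $\partial_x$ and on $v$ compensate each other in both equations, while $\sigma(w)$ is unchanged. Left-going and right-going characteristics are exchanged, so $2$-waves are mapped bijectively onto $1$-waves. Concretely, a $2$-wave joining a left state $(v_M, w_M)$ to a right state $(v_R, w_R)$ becomes a $1$-wave joining $(-v_R, w_R)$ on the left to $(-v_M, w_M)$ on the right, with classical shocks, rarefactions, and nonclassical shocks mapped to their $1$-wave counterparts; the selection of the latter is now governed by $\Phi^{\flat,2}$ playing the role of the $1$-kinetic function.

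I would then apply Proposition~\ref{p:FL1} to the reflected configuration, treating $\Phi^{\flat,2}$ as the $1$-kinetic function for the purposes of the argument; this is legitimate because Proposition~\ref{p:FL1} is a geometric statement about $1$-waves relative to whatever $1$-kinetic function is prescribed. The five cases of Proposition~\ref{p:FL1} then translate bullet-by-bullet into the five cases of Proposition~\ref{p:BL2}, with $w_L$ replaced by $w_R$, $\mathcal{H}_1$ replaced by $\mathcal{H}_2$, and the shock speed $-s(a,b)$ of a $1$-shock replaced by $+s(a,b)$ of the corresponding $2$-shock. This sign flip turns the comparison $-s(w_L, \Phi^{\flat,1}(w_M)) < -s(\Phi^{\flat,1}(w_M), w_M)$ of the $1$-wave (leading classical shock slower than the trailing nonclassical shock) into $s(w_M, \Phi^{\flat,2}(w_M)) < s(\Phi^{\flat,2}(w_M), w_R)$ of the $2$-wave (leading nonclassical shock slower than the trailing classical shock), which is exactly the criterion stated in Proposition~\ref{p:BL2}. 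The delimiters of the five cases, namely the sign conditions on $w_R w_M$ against $w_R^2$ and against $w_R \Phi^{-\flat,2}(w_R)$, are preserved because $w$ is untouched by the reflection, so no fresh case analysis is needed.

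The main obstacle I anticipate is the simultaneous bookkeeping of the three swaps (state, Hugoniot function, speed sign) and the verification that the spatial orientation of composite waves is preserved: a right-going composite $2$-wave must have its slower nonclassical shock on the left and its faster classical shock on the right, and this should be precisely the image under reflection of a left-going composite $1$-wave, whose leading classical shock lies on the left and trailing nonclassical shock on the right. Once this symmetry dictionary is carefully set up and one checks that the Rankine--Hugoniot relation for $2$-shocks ($v_L = v_R - s(w_L,w_R)(w_L-w_R)$) maps under $(v,w) \mapsto (-v,w)$ with state swap to the $1$-shock relation underlying $\mathcal{H}_1$, the proof is identical to that of Proposition~\ref{p:FL1} read in the mirror.
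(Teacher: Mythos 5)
Your argument is correct, but it cannot be compared with ``the paper's own proof'' because the paper gives none: Proposition~\ref{p:BL2}, like Proposition~\ref{p:FL1}, is quoted from Thanh and LeFloch~\cite{LT01} without proof, the backward $2$-wave description being stated only for completeness. What your reflection argument buys is a genuine derivation of Proposition~\ref{p:BL2} from Proposition~\ref{p:FL1}, rather than a second appeal to~\cite{LT01}. The dictionary you set up is the right one: $(v,w)(t,x)\mapsto(-v,w)(t,-x)$ preserves both equations of~\eqref{eq:NLelasticity}, swaps the characteristic families, exchanges left and right states, and flips the sign of every wave speed, so the speed-ordering condition of the last case of Proposition~\ref{p:FL1} transforms (using the symmetry of $s(\cdot,\cdot)$ in its two arguments) into exactly the inequality $s(w_{M},\varphi^{\flat,2}(w_{M}))<s(\varphi^{\flat,2}(w_{M}),w_{R})$ of Proposition~\ref{p:BL2}, and the spatial order of the composite wave reverses as it should. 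Two points deserve to be made explicit to close the argument. First, you should check that the reflection preserves not only the conservation laws but also the selection criteria: the entropy $\tfrac{v^{2}}{2}+\int_{0}^{w}\sigma(z)\,dz$ is even in $v$ and the entropy flux $-v\sigma(w)$ is odd, so the inequality~\eqref{eq:SEI} is invariant under the combined flip of $v$ and $x$, and the Liu chord condition involves only $w$ and is invariant under exchanging $w_{L}$ and $w_{R}$ (as the paper itself notes when discussing $2$-shocks); without this, ``classical'' and ``nonclassical'' would not be guaranteed to map to themselves. Second, your reduction presupposes that Proposition~\ref{p:FL1} holds for an arbitrary admissible kinetic function, so that $\Phi^{\flat,2}$ may be substituted for $\Phi^{\flat,1}$ in the reflected problem; this is true of the Thanh--LeFloch construction (the two kinetic functions are not assumed equal), but it is the one hypothesis your proof genuinely leans on. Incidentally, your reflection produces the statement with $\varphi^{\flat,2}(w_{M})$ throughout, which corrects the evident typos ($\varphi^{\flat,1}(w_{M})$ and $\varphi^{\flat,2}(w)$) in the printed statement of Proposition~\ref{p:BL2}.
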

 The distinction between the last two cases in the last point of the enumerations of Proposition~\ref{p:FL1} and~\ref{p:BL2} can be reexpressed geometrically, as illustrated on Figure~\ref{F:PhiDiese}.
\begin{prop} \label{p:PhiDiese}
 The exists a function $\varphi^{\sharp}$ such that for $i \in \{1, 2\}$, for all real $w$ and for all real $w_{M}$ such that $w w_{M}< w \varphi^{-\flat, i}(w)$, the quantity $|s(w, \varphi^{\flat, i}(w_{M}))|$ is larger than $|s(\varphi^{\flat, 1}(w_{M})),w_{M})|$ if and only if $w \varphi^{\flat, i}(w_{M})$ is larger than $w \varphi^{\sharp}(w,w_{M})$. In particular if $0 \geq w w_{M} \geq w_{M} \varphi^{\sharp}(w,w_{M})$, the solution is a classical shock (in which $w$ changes sign).
\end{prop}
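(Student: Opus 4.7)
My plan is to reinterpret the speeds $|s(\cdot,\cdot)|$ geometrically as slopes of chords on the graph of $\sigma$. By formula~\eqref{eq:shockspeed}, $|s(a,b)|^{2} = (\sigma(a)-\sigma(b))/(a-b)$ is exactly the slope of the chord joining $(a,\sigma(a))$ and $(b,\sigma(b))$. Setting $u := \varphi^{\flat,i}(w_{M})$, the comparison between $|s(w,u)|$ and $|s(u,w_{M})|$ is thus equivalent to comparing the slopes of the chords $[(w,\sigma(w)),(u,\sigma(u))]$ and $[(u,\sigma(u)),(w_{M},\sigma(w_{M}))]$; these slopes coincide if and only if the three points $(w,\sigma(w))$, $(u,\sigma(u))$, $(w_{M},\sigma(w_{M}))$ are aligned. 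This suggests defining $\varphi^{\sharp}(w,w_{M})$ implicitly as the unique real distinct from $w$ and $w_{M}$ such that $(\varphi^{\sharp}(w,w_{M}),\sigma(\varphi^{\sharp}(w,w_{M})))$ lies on the line through $(w,\sigma(w))$ and $(w_{M},\sigma(w_{M}))$, i.e.\ as the abscissa of the third intersection of the secant with the graph of $\sigma$.

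The first step is to establish that this third intersection exists and is unique in the regime of interest. In the range $w w_{M} < w \varphi^{-\flat,i}(w) \le 0$, the points $(w,\sigma(w))$ and $(w_{M},\sigma(w_{M}))$ lie on opposite sides of the inflection point of $\sigma$ at $0$, since~\eqref{eq:propstress} makes $\sigma$ strictly convex on $(0,+\infty)$ and strictly concave on $(-\infty,0)$. A standard convexity argument together with the coercivity $\sigma'(w)\to+\infty$ at infinity then yields that the secant meets the graph of $\sigma$ in exactly one further point, defining $\varphi^{\sharp}(w,w_{M})$ unambiguously, with its sign determined as on Figure~\ref{F:PhiDiese}.

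Next, to prove the equivalence, I would fix $w$ and $w_{M}$ and study
$$\Delta(u) \; := \; |s(w,u)|^{2} - |s(u,w_{M})|^{2} \; = \; \frac{\sigma(w)-\sigma(u)}{w-u} - \frac{\sigma(u)-\sigma(w_{M})}{u-w_{M}}$$
as $u$ varies. By construction $\Delta(\varphi^{\sharp}(w,w_{M})) = 0$; differentiating $\Delta$ and invoking the strict convexity (resp.\ concavity) of $\sigma$ on each side of~$0$ shows that $\Delta$ is strictly monotone in $w u$ on the branch of interest. Evaluating $\Delta$ at a convenient test value (for instance letting $u\to w$, where $|s(w,u)|^{2}\to\sigma'(w)$) then fixes the direction of this monotonicity, yielding the equivalence $|s(w,u)| > |s(u,w_{M})| \iff w u > w \varphi^{\sharp}(w,w_{M})$.

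Finally, the ``in particular'' statement follows by applying this equivalence to $u=\varphi^{\flat,i}(w_{M})$: the assumption $0 \ge w w_{M} \ge w_{M}\varphi^{\sharp}(w,w_{M})$ forces $\varphi^{\flat,i}(w_{M})$ to lie on the side of $\varphi^{\sharp}(w,w_{M})$ corresponding to $|s(w,\cdot)| \le |s(\cdot,w_{M})|$, so the composite classical+nonclassical wave from Propositions~\ref{p:FL1} and~\ref{p:BL2} is no longer admissible and the Riemann solution collapses to a single classical shock in which $w$ changes sign. The main obstacle I foresee is the careful bookkeeping of signs---verifying that $\varphi^{\sharp}(w,w_{M})$ lies on the correct branch and that the monotonicity of $\Delta$ is in the expected direction---so that the equivalence can be stated uniformly for both $i\in\{1,2\}$ and for both signs of $w$ through the signed product $w u$ versus $w\varphi^{\sharp}(w,w_{M})$.
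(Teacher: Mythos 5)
The paper does not actually prove Proposition~\ref{p:PhiDiese}: it is presented as a geometric restatement of the case distinction of Thanh and LeFloch (Propositions~\ref{p:FL1} and~\ref{p:BL2}) and is supported only by Figure~\ref{F:PhiDiese}, so your proposal must be judged on its own. Its central idea is correct: defining $\varphi^{\sharp}(w,w_{M})$ as the abscissa of the third intersection of the secant through $(w,\sigma(w))$ and $(w_{M},\sigma(w_{M}))$ with the graph of $\sigma$ is exactly what makes the statement work, and it is confirmed by the explicit formula $\Phi^{\sharp}(w,w')=-w-w'$ given later for $\sigma(w)=w^{3}+mw$ (the third root of a depressed cubic minus a line). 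The existence and uniqueness of that third intersection does follow, as you say, from concavity on $\R_{-}$, convexity on $\R_{+}$ and the superlinear growth of $\sigma$ at $\pm\infty$.

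Two steps are genuinely incomplete. First, the claimed monotonicity of $\Delta(u)$ is not a direct consequence of one-sided convexity: the chord from $u$ to $w_{M}$ straddles the inflection point at $0$, so its slope need not be monotone in $u$. The clean argument is the convex-combination identity: since $w$ lies strictly between $w_{M}$ and $u=\varphi^{\flat,i}(w_{M})$ (an ordering that follows from the hypothesis $ww_{M}<w\varphi^{-\flat,i}(w)$ and the monotonicity of $\varphi^{\flat,i}$, and which must be checked), the slope of the chord $[w_{M},u]$ is a convex combination of the slopes of $[w_{M},w]$ and $[w,u]$, so $|s(w,u)|^{2}-|s(u,w_{M})|^{2}$ has the sign of $\sigma(u)-\ell(u)$, where $\ell$ is the secant through $(w,\sigma(w))$ and $(w_{M},\sigma(w_{M}))$; the convexity analysis then shows this quantity changes sign exactly at $u=\varphi^{\sharp}(w,w_{M})$, in the direction encoded by $wu$ versus $w\varphi^{\sharp}(w,w_{M})$. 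Second, the ``in particular'' does not follow by the one-line argument you give. The printed hypothesis is $0\ge ww_{M}\ge w_{M}\varphi^{\sharp}(w,w_{M})$ (with $w_{M}\varphi^{\sharp}$, not $w\varphi^{\sharp}$), and it does not imply $w\varphi^{\flat,i}(w_{M})\le w\varphi^{\sharp}(w,w_{M})$: for the cubic with $\beta=2/3$, $w=1$, $w_{M}=-5/2$ one has $ww_{M}=-5/2\ge -15/4=w_{M}\varphi^{\sharp}(w,w_{M})$ and $ww_{M}<w\varphi^{-\flat,1}(w)=-3/2$, yet $w\varphi^{\flat,1}(w_{M})=5/3>3/2=w\varphi^{\sharp}(w,w_{M})$, so the first part of the proposition yields the composite classical-then-nonclassical wave, not a single classical shock. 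Either the inequality in the ``in particular'' must be reinterpreted (the paper has several sign slips of this kind, compare the third condition in Proposition~\ref{PDetectPT}) or the deduction needs a different argument; as written, your last paragraph does not close this step.
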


\begin{psfrags}
\psfrag{wR}{$w_{R}$}
\psfrag{wM}{$w_{M}$}
\psfrag{wB}{$\Phi^{\flat,2}(w_{M})$}
\psfrag{Phi}{$\Phi^{\sharp}(w_{R},w_{M})$}
 \begin{figure}[h!tp]
 \includegraphics[width=0.53\linewidth, clip=true, trim=1cm 1cm 1cm 1cm]{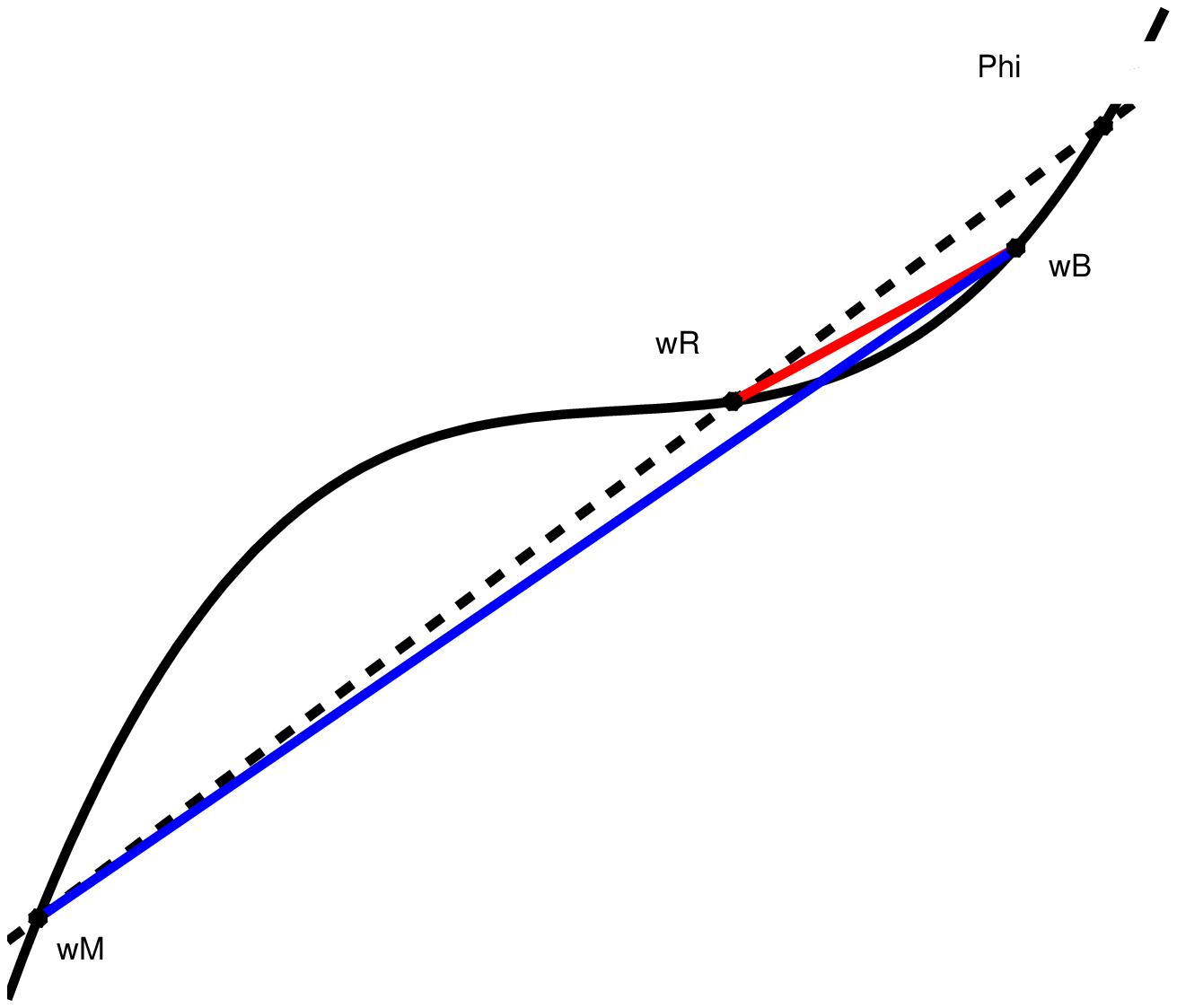}
 \includegraphics[width=0.47\linewidth, clip=true, trim=1cm 1cm 1cm 1cm]{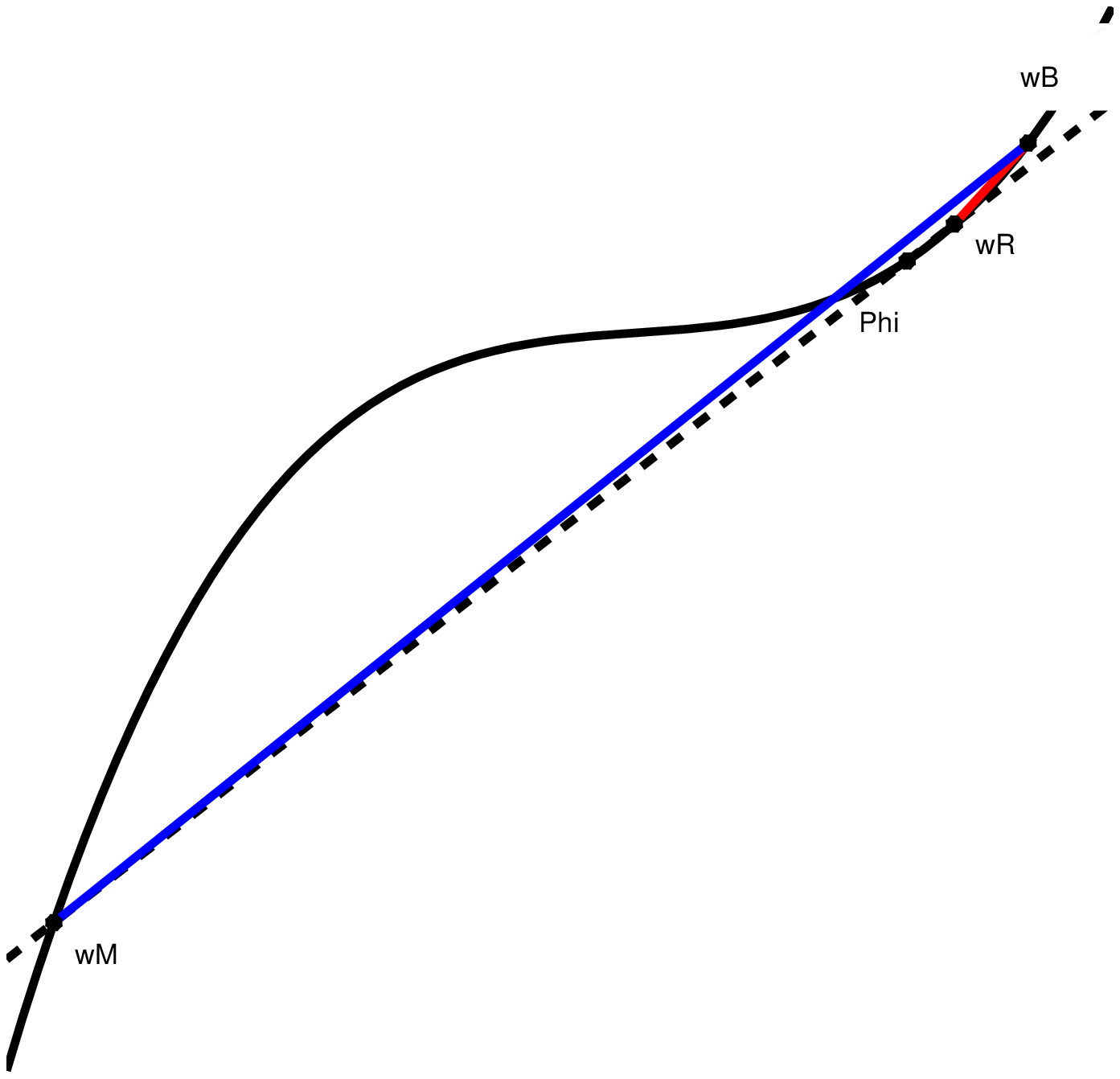}
\caption{The solution contains a $2$-nonclassical shock  (in blue) followed by $2$-classical shock  (in red) if their speeds are correctly ordered, i.e if the slope of the blue segment is smaller than the red one (right). This depends on the relative position of $\Phi^{\flat,2}(w_{M})$ and $\Phi^{\sharp}(w_{M},w_{L})$.} \label{F:PhiDiese}
\end{figure}
\end{psfrags}

\section{The scheme}
We now describe the scheme. The time discretization is denoted by $t^{0}=0 < t^{1} < \cdots < t^{n} < \cdots$, and at each time $t^{n}$, the space is discretized with cells having all the same size $\Delta x$.  We denote by $x_{j-1/2}^{n}$ their extremities and by $x_{j}^{n}$ there centers. The centers of the cells will move at each time step but every cell will always be of size $\Delta x$. The speed of the mesh is  defined by $V^{n}_{\textup{mesh}}= \frac{x_{j}^{n+1}-x_{j}^{n}}{t^{n+1}-t^{n}}$. Integrating the first line of~\eqref{eq:NLelasticity} over the set
$$\{(t,x), t^{n} \leq t < t^{n+1}, x_{j-1/2}^{n}+ V_{\textup{mesh}}^{n} t <x < x_{j+1/2}^{n}+ V_{\textup{mesh}}^{n} t \} $$
we obtain the integral formula
$$ 
\begin{aligned}
 \int_{x_{j-1/2}^{n}}^{x_{j+1/2}^{n}}  v(t^{n+1}, x) dx =&  \int_{x_{j-1/2}^{n}}^{x_{j+1/2}^{n}} v(t^{n}, x) dx  \\
 &- \left[\int_{t^{n}}^{t^{n+1}} (-\sigma(w)-V_{\textup{mesh}}^{n} v)(s, x_{j+1/2}^{n} + V_{\textup{mesh}}^{n} (s-t^{n})) ds  \right.\\
 & \left. -  \int_{t^{n}}^{t^{n+1}} (-\sigma(w)-V_{\textup{mesh}}^{n} v)(s, x_{j-1/2}^{n} + V_{\textup{mesh}}^{n} (s-t^{n})) ds \right].
\end{aligned}
$$
The formula is identical for the variable $w$ with the flux $(-v-V_{\textup{mesh}}^{n} w)$ instead of $(-\sigma(w)-V_{\textup{mesh}}^{n} v$. The principle of  finite volume  methods is based on this integral formula. A finite volume scheme writes
\begin{equation} \label{eq:FVS}
\begin{cases}
 v_{j}^{n+1} &= v_{j}^{n} -  \frac{t^{n+1}-t^{n}}{\Delta x} ( f_{j+1/2}^{n,v}- f_{j-1/2}^{n,v}), \\
 w_{j}^{n+1} &= w_{j}^{n} -  \frac{t^{n+1}-t^{n}}{\Delta x} ( f_{j+1/2}^{n,w}- f_{j-1/2}^{n,w}),
\end{cases}
\end{equation}
 where $v_{j}^{n}$ plays the role of the mean value 
 $$\frac{1}{\Delta x} \int_{x_{j-1/2}^{n}}^{x_{j+1/2}^{n}} v(t^{n}, x) dx $$
 and $f_{j+1/2}^{n,v}$ plays the role of the flux on the edge $x=x_{j-1/2}+ V_{\textup{mesh}}^{n}(s-t^{n})$
 $$ \frac{1}{t^{n+1}-t^{n}} \int_{t^{n}}^{t^{n+1}} (-\sigma(w)-V_{\textup{mesh}}^{n} v)(s, x_{j+1/2}^{n} + V_{\textup{mesh}}^{n} (s-t^{n})) ds  $$
 (and similarly for $w_{j}^{n}$ and $f_{j+1/2}^{n,w}$). The scheme is initialized with the exact average of the initial data
 \begin{equation} \label{eq:inisamp}
\forall j \in \Z, \   v_{j}^{0}= \frac{1}{\Delta x} \int_{x_{j-1/2}^{0}}^{x_{j+1/2}^{0}} v^{0}(x) dx \quad \text{ and } \quad  w_{j}^{0}= \frac{1}{\Delta x} \int_{x_{j-1/2}^{0}}^{x_{j+1/2}^{0}} w^{0}(x) dx.
\end{equation}
A particular finite volume scheme is characterized by a particular choice of a formula expressing the numerical fluxes $f_{j+1/2}^{n,v}$ and $f_{j+1/2}^{n,w}$ in terms of the $(v_{k}^{n})_{k \in \Z}$ and $(w_{k}^{n})_{k \in \Z}$. Our choice is described in the next sections.
 
  Recall that our aim is to derive a scheme which is exact when the initial data is a nonclassical shock. In that case, the initial sampling~\eqref{eq:inisamp} introduces a small amount of dissipation (unless if by miracle the shock initially falls on an interface). The numerical initial data contains an intermediate value that does not correspond to any pointwise value of $(v^{0},w^{0})$.  More generally, at the end of every time step, finite volume schemes contains a~$L^{1}$-projection on the mesh. Details of the solution are lost in that step, and intermediate values are created in the shocks profile. The main idea of the discontinuous reconstruction scheme is to rebuilt, from the mean values $(v_{j}^{n},w_{j}^{n})_{j \in \Z}$, an initial data that contains more details.  This idea is also the foundation of well-known other schemes, like the MUSCL scheme~\cite{VL79} and its central version~\cite{NT90}. Our reconstruction consists in adding nonclassical shocks in some cells in which we are able to detect that a nonclassical shock where lying inside the cell before the $L^{1}$-projection.  Thus the reconstruction will be very precise near nonclassical shocks, and not on the smooth parts of the solution, taking the opposite strategy of~\cite{VL79} and~\cite{NT90}
 
Following this idea, our scheme can be decomposed in three elementary steps.
\begin{itemize}
\item First, we detect the special cells  which are associated with nonclassical shocks. This detection step is described in Section~\ref{SDetect}.
\item Then, we reconstruct nonclassical shocks inside those particular cells, in the sense that we replace the mean values $(v_{j}^{n}, w_{j}^{n})$ by piecewise constant functions having the form of a nonclassical shock. This procedure is explained in Section~\ref{SRec}.
\item Eventually, the numerical fluxes are computed by letting the reconstructed nonclassical shocks evolve during the time step $t^{n+1}-t^{n}$. The use of a moving mesh makes this computation easy, see Section~\ref{SAdv}.
\end{itemize}
In the last section, we extend the scheme to reconstruct classical shocks as well.

\subsection{Detection of nonclassical shocks} \label{SDetect}

The key idea of the scheme is to see, whenever it is possible, each mean value $(v_{j}^{n}, w_{j}^{n})$ produced by the finite volumes scheme as the average of some nonclassical shock located somewhere inside the cell. Of course, where $(v_{j-1}^{n}, w_{j-1}^{n})$ and $(v_{j+1}^{n}, w_{j+1}^{n})$ are linked by a nonclassical shock, it is the one that should be reconstructed. In general, $(v_{j-1}^{n}, w_{j-1}^{n})$ and $(v_{j+1}^{n}, w_{j+1}^{n})$ are not linked by a single nonclassical shock, but by the full pattern of two waves, each of them likely to contain a succession of a classical shock or a rarefaction followed by a nonclassical shock (see Propositions~\ref{p:FL1} and~\ref{p:BL2}). In that case, we reconstruct one of the nonclassical shock appearing in the solution of the Riemann problem. It is chosen thanks to the following Lemma.

\begin{prop} \label{PDetectPT}
 If the states $(v_{L},w_{L})$ and $(v_{R},w_{R})$ are linked by a $1$-nonclassical shock, then
 \begin{equation} \label{eq:Detect1}
 w_{L} w_{R}<0 \  , \  (w_{L}-w_{R})(v_{L}-v_{R}) >0  \text{ and } w_{L} w_{R} \geq w_{R} \varphi^{\sharp}(w_{L},w_{R}).
\end{equation}
 If they are linked by a $2$-nonclassical shock, then
 $$w_{L} w_{R}<0 \ (w_{L}-w_{R})(v_{L}-v_{R}) <0 \ \text{ and } \ w_{L} w_{R} \geq w_{L} \varphi^{\sharp}(w_{L},w_{R}). $$
\end{prop}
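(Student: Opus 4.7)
The plan is to verify each of the three inequalities independently, drawing on a different piece of structure from Section~1: the sign condition $w_{L}w_{R}<0$ comes from the kinetic constraint stated just after~\eqref{eq:KR}, the sign of $(w_{L}-w_{R})(v_{L}-v_{R})$ from the 1-shock Rankine--Hugoniot formula~\eqref{eq:H1}, and the inequality involving $\varphi^{\sharp}$ from Proposition~\ref{p:PhiDiese}. The 2-nonclassical shock case is then obtained by replaying the argument with $\mathcal{H}_{2}$ and Proposition~\ref{p:BL2} in place of $\mathcal{H}_{1}$ and Proposition~\ref{p:FL1}.

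For the first inequality I would combine $w_{L}=\Phi^{\flat,1}(w_{R})$ with the bracketing estimate $w\Phi^{\flat}_{\infty}(w)\leq w\Phi^{\flat,i}(w)\leq w\Phi^{\natural}(w)$. Since $\Phi^{\natural}(w_{R})$ has opposite sign to $w_{R}$ (cf.\ Figure~\ref{F:Notations}), the product $w_{R}\Phi^{\natural}(w_{R})$ is strictly negative, and hence $w_{L}w_{R}=w_{R}\Phi^{\flat,1}(w_{R})\leq w_{R}\Phi^{\natural}(w_{R})<0$. For the second inequality, a one-line computation from~\eqref{eq:H1} gives $v_{L}-v_{R}=s(w_{L},w_{R})(w_{L}-w_{R})$, so $(w_{L}-w_{R})(v_{L}-v_{R})=s(w_{L},w_{R})(w_{L}-w_{R})^{2}$, which is strictly positive since $s$ is a positive square root and $w_{L}\neq w_{R}$ by the first point.

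The hard part is the inequality involving $\varphi^{\sharp}$. My plan is to invoke Proposition~\ref{p:PhiDiese} with $w=w_{L}$, $w_{M}=w_{R}$ and $i=1$. The kinetic relation $w_{L}=\varphi^{\flat,1}(w_{R})$ rewrites as $w_{R}=\varphi^{-\flat,1}(w_{L})$, so that $(w_{L},w_{R})$ sits exactly on the boundary of the domain $ww_{M}<w\varphi^{-\flat,1}(w)$ in which Proposition~\ref{p:PhiDiese} characterizes $\varphi^{\sharp}$. In this boundary configuration the 1-wave described in Proposition~\ref{p:FL1} collapses to the pure nonclassical shock and the intermediate state $\varphi^{\flat,1}(w_{R})$ coincides with $w_{L}$, so by continuity I expect $\varphi^{\sharp}(w_{L},w_{R})=w_{L}$ and the desired inequality $w_{L}w_{R}\geq w_{R}\varphi^{\sharp}(w_{L},w_{R})$ to hold with equality. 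Making this continuity argument precise, and in particular correctly tracking the sign bookkeeping in the ``in particular'' part of Proposition~\ref{p:PhiDiese} so that the multiplier of $\varphi^{\sharp}$ really comes out as $w_{R}$ (and not $w_{L}$), will be the main technical obstacle.

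The 2-nonclassical shock case is then entirely analogous: the sign reversal in $(w_{L}-w_{R})(v_{L}-v_{R})$ stems from $\mathcal{H}_{2}$ giving $v_{L}-v_{R}=-s(w_{L},w_{R})(w_{L}-w_{R})$, and the roles of $w_{L}$ and $w_{R}$ in the $\varphi^{\sharp}$ inequality get exchanged because Proposition~\ref{p:BL2} places the nonclassical shock on the left of the 2-wave, whereas Proposition~\ref{p:FL1} places it on the right of the 1-wave.
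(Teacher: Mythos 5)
Your handling of the first two conditions is correct and essentially what the paper does: the sign condition $w_{L}w_{R}<0$ is immediate because a nonclassical shock always connects states of opposite sign (the paper simply asserts this; your derivation from the bracketing $w\Phi^{\flat}_{\infty}(w)\leq w\Phi^{\flat,i}(w)\leq w\Phi^{\natural}(w)$ together with $w\Phi^{\natural}(w)<0$ is a legitimate way to justify it), and the second condition is the Rankine--Hugoniot computation, which the paper writes via~\eqref{eq:RHElasto} rather than~\eqref{eq:H1} --- same content.

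The gap is in the third condition, which is exactly the part you flag as the main obstacle and then do not close. The paper disposes of it in one line by reading it off the ``in particular'' clause of Proposition~\ref{p:PhiDiese}: a nonclassical shock is in particular not a single classical shock, so $(w_{L},w_{R})$ cannot satisfy the condition there that forces a classical shock, and combined with $w_{L}w_{R}<0$ this is the stated inequality. Your route instead applies the ``if and only if'' part of Proposition~\ref{p:PhiDiese} at the boundary $w_{L}w_{R}=w_{L}\varphi^{-\flat,1}(w_{L})$ of its domain of validity, and the continuity claim you rest on --- that $\varphi^{\sharp}(w_{L},w_{R})=w_{L}$ there, so that the inequality holds with equality --- is false. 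You are conflating two different objects: the actual intermediate state $\varphi^{\flat,1}(w_{R})$ of the wave fan, which does collapse to $w_{L}$ at the kinetic boundary, and the threshold $\varphi^{\sharp}(w_{L},w_{R})$, which is the critical value of the intermediate state at which the two shock speeds $|s(w_{L},\cdot)|$ and $|s(\cdot,w_{R})|$ coincide; the latter depends only on $(w_{L},w_{R})$ and not on the kinetic function, so it has no reason to degenerate to $w_{L}$. Concretely, with the formulas of the numerical section ($\sigma(w)=w^{3}+mw$, $\varphi^{\sharp}(w,w')=-w-w'$, $\Phi^{\flat,1}(w)=-\beta w$), the kinetic boundary gives $\varphi^{\sharp}(w_{L},w_{R})=w_{L}(1-\beta)/\beta$, which equals $w_{L}$ only in the classical case $\beta=1/2$; for the data of Test~1 ($w_{L}=-6$, $w_{R}=9$, $\beta=2/3$) one finds $w_{L}w_{R}=-54$ while $w_{R}\varphi^{\sharp}(w_{L},w_{R})=-27$, so the claimed equality certainly does not hold, and the comparison between the two sides hinges entirely on the sign and argument conventions for $\varphi^{\sharp}$ in Proposition~\ref{p:PhiDiese} --- precisely the bookkeeping you postponed. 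To repair the argument you should drop the continuity step and either argue by contraposition from the ``in particular'' clause, or compare $s(w_{L},\varphi^{\flat,1}(w_{R}))$ and $s(\varphi^{\flat,1}(w_{R}),w_{R})$ directly for a pure nonclassical shock.
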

\begin{proof}
The condition $w_{L} w_{R}<0$ always holds through a nonclassical shock. The second one is a straightforward consequence of the Rankine--Hugoniot relations~\eqref{eq:RH}. In the case of nonlinear elastodynamics~\eqref{eq:NLelasticity}, they write
 \begin{equation} \label{eq:RHElasto}
 \begin{cases}
 (-1)^{i} s(w_{L},w_{R})(v_{L}-v_{R})=\sigma(w_{R})- \sigma(w_{L}); \\
 (-1)^{i}s(w_{L},w_{R})(w_{L}-w_{R})=v_{R}-v_{L}.
\end{cases}
\end{equation}
with $i=1$ for $1$-shocks and $i=2$ for $2$-shocks. The positive real $s(w_{L},w_{R})$ is defined in Formula~\eqref{eq:shockspeed}.  Eventually, the last condition is a reminder of Proposition~\ref{p:PhiDiese}.
\end{proof}

\begin{defi} \label{DDesired}
 For all integer $j$ and all positive integer $n$, we denote by $(v_{j,\star}^{n}, w_{j,\star}^{n})$ the intermediate state appearing in the Riemann problem with left state $(v_{j-1}^{n}, w_{j-1}^{n})$ and right state $(v_{j+1}^{n}, w_{j+1}^{n})$. The \emph{left and right desired reconstructed states}, denoted respectively by $(\bar{v}_{j,L}^{n}, \bar{w}_{j,L}^{n})$ and $(\bar{v}_{j,R}^{n}, \bar{w}_{j,R}^{n})$, are defined as follows.
 \begin{itemize}
 \item If $w_{j-1}^{n} w_{j+1}^{n}<0$,  $(w_{j-1}^{n}-w_{j+1}^{n})(v_{j-1}^{n}-v_{j+1}^{n}) >0$, $w_{j-1}^{n} w_{j+1}^{n} \geq w_{j+1}^{n} \varphi^{\sharp}(w_{j-1}^{n},w_{j+1}^{n})$ and $w_{j-1}^{n} w_{j,\star}^{n}<0$, the solution contains a $1$-shock in which $w$ changes sign.
 \begin{itemize}
 \item If this shock is nonclassical, it is preceded by a classical wave and we set
 $$ 
\begin{cases}
(\bar{v}_{j,L}^{n}, \bar{w}_{j,L}^{n}) &= (\mathcal{H}_{1}(\Phi^{ \flat,1}(w_{j,\star}^{n}), v_{j,\star}^{n}, w_{j,\star}^{n}), \Phi^{ \flat,1}(w_{j,\star}^{n})), \\
(\bar{v}_{j,R}^{n}, \bar{w}_{j,R}^{n}) &= (v_{j,\star}^{n}, w_{j,\star}^{n}).
\end{cases} 
$$
 \item  If this shock is classical, it is the only contribution in the $1$-wave and we set
 $$ 
\begin{cases}
(\bar{v}_{j,L}^{n}, \bar{w}_{j,L}^{n}) &= (v_{j-1}^{n}, w_{j-1}^{n}), \\
(\bar{v}_{j,R}^{n}, \bar{w}_{j,R}^{n}) &= (v_{j,\star}^{n}, w_{j,\star}^{n}).
\end{cases} 
$$
\end{itemize}
  \item If $w_{j-1}^{n} w_{j+1}^{n}<0$,  $(w_{j-1}^{n}-w_{j+1}^{n})(v_{j-1}^{n}-v_{j+1}^{n}) <0$, $w_{j-1}^{n} w_{j+1}^{n} \geq w_{j-1}^{n} \varphi^{\sharp}(w_{j-1}^{n},w_{j+1}^{n})$ and $w_{j+1}^{n} w_{j,\star}^{n}<0$, the solution contains a $2$-shock in which $w$ changes sign.
 \begin{itemize}
 \item If this shock is nonclassical, it is followed by a classical wave and we set
 $$ 
\begin{cases}
(\bar{v}_{j,L}^{n}, \bar{w}_{j,L}^{n}) &=(v_{j,\star}^{n}, w_{j,\star}^{n}),  \\
(\bar{v}_{j,R}^{n}, \bar{w}_{j,R}^{n}) &= (\mathcal{H}_{2}(\Phi^{ \flat,2}(w_{j,\star}^{n}), v_{j,\star}^{n}, w_{j,\star}^{n}), \Phi^{ \flat,2}(w_{j,\star}^{n})).
\end{cases} 
$$
 \item If this shock is classical, it is the only contribution in the $2$-wave and we set
 $$ 
\begin{cases}
(\bar{v}_{j,L}^{n}, \bar{w}_{j,L}^{n}) &= (v_{j,\star}^{n}, w_{j,\star}^{n}), \\
(\bar{v}_{j,R}^{n}, \bar{w}_{j,R}^{n}) &= (v_{j+1}^{n}, w_{j+1}^{n}).
\end{cases} 
$$
\end{itemize}
\item In the other cases, we do not detect any relevant nonclassical shock and we set
 $$ 
\begin{cases}
(\bar{v}_{j,L}^{n}, \bar{w}_{j,L}^{n}) &= (v_{j}^{n}, w_{j}^{n}), \\
(\bar{v}_{j,R}^{n}, \bar{w}_{j,R}^{n}) &= (v_{j}^{n}, w_{j}^{n}).
\end{cases} 
$$
\end{itemize}
\end{defi}
\begin{rem}
 If there is a nonclassical shock in the Riemann problem with left state $(v_{j-1}^{n}, w_{j-1}^{n})$ and right state $(v_{j+1}^{n}, w_{j+1}^{n})$, then the first two three conditions holds by Proposition~\ref{PDetectPT}. Note that they are numerically very easy to check. The last condition insures that the Riemann problem indeed contains the expected shock; we verify it only on the cells that passed the first three tests.
\end{rem}

\subsection{Reconstruction} \label{SRec}
Once a nonclassical shock has been detected, it is placed inside its cell by conservation of the variables $v$ and $w$. We reverse the averaging step of finite volume schemes by replacing the mean value $v_{j}^{n}$ by the piecewise constant function
$$v_{\textup{rec}}^{n}(x)=  \bar{v}_{j,L}^{n} \mathbf{1}_{x<x_{j-1/2}+d_{j}^{n,v}} +   \bar{v}_{j,R}^{n} \mathbf{1}_{x>x_{j-1/2}+d_{j}^{n,v}}$$
with
\begin{equation}\label{eq:dv}
 d_{j}^{n,v} =\Delta x \frac{v_{j}^{n}-\bar{v}_{j,R}^{n}}{\bar{v}_{j,L}^{n}-\bar{v}_{j,R}^{n}}.
\end{equation}
If $d_{j}^{n,v}$ belongs to $(0, \Delta x)$ we have
$$ \int_{x_{j-1/2}^{n}}^{x_{j+1/2}^{n}} v_{\textup{rec}}^{n}(x) \, dx = \Delta x v_{j}^{n} $$
and no mass is loss when replacing the mean value by $v_{\textup{rec}}^{n}$ inside the $j$-th cell.

Reasoning similarly for the $w$ variable, we replace $w_{j}^{n}$ by
$$w_{\textup{rec}}^{n}(x)=  \bar{w}_{j,L}^{n} \mathbf{1}_{x<x_{j-1/2}+d_{j}^{n,w}} +   \bar{w}_{j,R}^{n} \mathbf{1}_{x>x_{j-1/2}+d_{j}^{n,w}},$$
with
\begin{equation}\label{eq:dw}
 d_{j}^{n,w} =\Delta x \frac{w_{j}^{n}-\bar{w}_{j,R}^{n}}{\bar{w}_{j,L}^{n}-\bar{w}_{j,R}^{n}}.
\end{equation}
We now state one trivial but crucial property of this reconstruction procedure. 
\begin{prop} \label{p:recexa} If  $(v_{j-1}^{n}, w_{j-1}^{n})$ and $(v_{j+1}^{n}, w_{j+1}^{n})$ are linked by a single nonclassical shock, and if it exists $\alpha$ in $(0,1)$ such that
$$ (v_{j}^{n}, w_{j}^{n})= \alpha (v_{j-1}^{n}, w_{j-1}^{n}) + (1-\alpha) (v_{j+1}^{n}, w_{j+1}^{n}), $$
 then 
$$(\bar{v}_{j,L}^{n}, \bar{w}_{j,L}^{n})= (v_{j-1}^{n}, w_{j-1}^{n}), \quad (\bar{v}_{j,R}^{n}, \bar{w}_{j,R}^{n})=(v_{j+1}^{n}, w_{j+1}^{n})  \quad \text{ and } \quad d_{j}^{n,v}=d_{j}^{n,w}= \alpha \Delta x.$$
\end{prop}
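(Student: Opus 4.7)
The plan is to unpack Definition~\ref{DDesired} and then verify by direct computation that the reconstruction recovers exactly the two states $(v_{j\pm 1}^n, w_{j\pm 1}^n)$ and places the discontinuity at the barycentric position $\alpha \Delta x$. I will treat the case of a $1$-nonclassical shock in detail; the case of a $2$-nonclassical shock is strictly symmetric.

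First I would observe that, since $(v_{j-1}^n, w_{j-1}^n)$ and $(v_{j+1}^n, w_{j+1}^n)$ are linked by a single $1$-nonclassical shock, the self-similar solution of the Riemann problem with these two states contains only that one wave, so the intermediate state of Definition~\ref{DDesired} is simply $(v_{j,\star}^n, w_{j,\star}^n) = (v_{j+1}^n, w_{j+1}^n)$. Next, I would verify that we fall into the first branch of Definition~\ref{DDesired}: Proposition~\ref{PDetectPT} gives the three inequalities $w_{j-1}^n w_{j+1}^n < 0$, $(w_{j-1}^n - w_{j+1}^n)(v_{j-1}^n - v_{j+1}^n) > 0$, and $w_{j-1}^n w_{j+1}^n \geq w_{j+1}^n \varphi^\sharp(w_{j-1}^n, w_{j+1}^n)$, while the fourth condition $w_{j-1}^n w_{j,\star}^n < 0$ is just the first one rewritten. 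Since the shock is, by hypothesis, nonclassical, the relevant sub-case of Definition~\ref{DDesired} yields
\begin{equation*}
(\bar{v}_{j,L}^n, \bar{w}_{j,L}^n) = \bigl(\mathcal{H}_1(\Phi^{\flat,1}(w_{j+1}^n), v_{j+1}^n, w_{j+1}^n),\, \Phi^{\flat,1}(w_{j+1}^n)\bigr), \qquad (\bar{v}_{j,R}^n, \bar{w}_{j,R}^n) = (v_{j+1}^n, w_{j+1}^n).
\end{equation*}
The kinetic relation~\eqref{eq:KR} applied to our $1$-nonclassical shock gives $w_{j-1}^n = \Phi^{\flat,1}(w_{j+1}^n)$, and the Rankine--Hugoniot identity~\eqref{eq:H1} then forces $v_{j-1}^n = \mathcal{H}_1(w_{j-1}^n, v_{j+1}^n, w_{j+1}^n)$. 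Hence $(\bar{v}_{j,L}^n, \bar{w}_{j,L}^n) = (v_{j-1}^n, w_{j-1}^n)$ and $(\bar{v}_{j,R}^n, \bar{w}_{j,R}^n) = (v_{j+1}^n, w_{j+1}^n)$ as claimed.

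Finally, substituting these values into the formulas~\eqref{eq:dv} and~\eqref{eq:dw} and using the convex combination assumption $(v_j^n, w_j^n) = \alpha(v_{j-1}^n, w_{j-1}^n) + (1-\alpha)(v_{j+1}^n, w_{j+1}^n)$ gives
\begin{equation*}
d_j^{n,v} = \Delta x\, \frac{v_j^n - v_{j+1}^n}{v_{j-1}^n - v_{j+1}^n} = \alpha \Delta x, \qquad d_j^{n,w} = \Delta x\, \frac{w_j^n - w_{j+1}^n}{w_{j-1}^n - w_{j+1}^n} = \alpha \Delta x,
\end{equation*}
which is the desired equality. The only subtlety — and the one point that needs a sentence of justification — is that the denominators are nonzero: $w_{j-1}^n \neq w_{j+1}^n$ because the shock has strictly opposite-signed end states by the nonclassical condition, and $v_{j-1}^n \neq v_{j+1}^n$ follows from the Rankine--Hugoniot relation~\eqref{eq:RHElasto} since $s(w_{j-1}^n, w_{j+1}^n) > 0$ by~\eqref{eq:shockspeed} and $w_{j-1}^n \neq w_{j+1}^n$. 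The $2$-nonclassical shock case is handled identically by swapping the roles of $L$ and $R$ and using the symmetric branch of Definition~\ref{DDesired}.
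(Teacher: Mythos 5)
Your proof is correct and is exactly the direct unpacking of Definition~\ref{DDesired}, the kinetic relation~\eqref{eq:KR} and the formulas~\eqref{eq:dv}--\eqref{eq:dw} that the paper has in mind when it states this proposition as ``trivial'' and omits the proof. The identification $(v_{j,\star}^{n},w_{j,\star}^{n})=(v_{j+1}^{n},w_{j+1}^{n})$, the use of the symmetry of $\mathcal{H}_{1}$, and the check that the denominators are nonzero are all the right details to supply.
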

In particular if the initial data is a nonclassical shock, we have $v^0_{\textup{rec}}=v^0$ and  $w^0_{\textup{rec}}=w^0$: the numerical diffusion introduced by the initial sampling~\eqref{eq:inisamp} is cancelled by the reconstruction.

In general the two distances $d_{j}^{n,v}$ and $d_{j}^{n,w}$ are different, and it is possible that at least one of these distances does not belong to $(0, \Delta x)$. In that case we cancel the reconstruction, considering that seeing $(v_{j}^{n}, w_{j}^{n})$ as the mean value of the detected nonclassical shock is not relevant.
\begin{defi} \label{DRecStates}
 The \emph{left and right reconstructed states} are defined by:
 \begin{equation} \label{eq:recL}
 (v_{j,L}^{n}, w_{j,L}^{n})= 
\begin{cases}
  (\bar{v}_{j,L}^{n}, \bar{w}_{j,L}^{n}) & \text{ if } d_{j}^{n,v} \in (0, \Delta x) \text{ and } d_{j}^{n,w} \in (0, \Delta x), \\
 (v_{j}^{n},w_{j}^{n}) & \text{ otherwise, } 
\end{cases}
\end{equation}
\begin{equation} \label{eq:recR}
 (v_{j,R}^{n}, w_{j,R}^{n})= 
\begin{cases}
  (\bar{v}_{j,R}^{n}, \bar{w}_{j,R}^{n}) & \text{ if } d_{j}^{n,v} \in (0, \Delta x) \text{ and } d_{j}^{n,w} \in (0, \Delta x), \\
 (v_{j}^{n},w_{j}^{n}) & \text{ otherwise. } 
\end{cases}
\end{equation}
\end{defi}
According to the Rankine--Hugoniot relations~\eqref{eq:RH}, the nonclassical shock reconstructed in cell $j$ has  velocity
$$ s_{j}^{n} = \frac{v_{j,R}^{n}- v_{j,L}^{n}}{w_{j,L}^{n}- w_{j,R}^{n}} $$ 
and we set arbitrary $s_{j}^{n}$ to $\sqrt{\sigma'(w_{j})}$ when no reconstruction is performed (which reads $w_{j,L}^{n}= w_{j,R}^{n}=w_{j}^{n}$). 

\subsection{Advection of the reconstructed discontinuities} \label{SAdv}
The fluxes are computed by letting the reconstructed nonclassical shocks evolve during the time step and by computing exactly what goes through the interfaces. However, two discontinuities reconstructed in adjacent cells can interact and the waves resulting from the interaction can meet the line $x=x_{j+1/2}^{n}$ within the time step. It follows that if we want to use a fixed grid ($x_{j}^{n+1}=x_{j}^{n}$, or equivalently $V^{n}_{\textup{mesh}}=0$), the flux along the interface $x=x_{j+1/2}^{n}$ cannot be computed without resolving the wave interaction, which is obviously extremely costly. 

This can been avoiding by using a moving mesh. Let us recall that by moving mesh, we mean that the centers of the cells are moving from time to time, but that their size remains constant equals to $\Delta x$. Thus the numerical difficulties of handling cells with different and varying widths is avoided. The mesh  speed is chosen such that it is larger than the maximum of the waves speed:
\begin{equation} \label{eq:Vmesh}
 |V_{\textup{mesh}}^{n}| > V_{waves}^{n}= \max_{j \in \Z} \sqrt{ \sigma'(w_{j}^{n})},
\end{equation}
and the time step such that a wave cannot cross more than an entire cell during the time step:
\begin{equation} \label{eq:CFL}
 t^{n+1}-t^{n} \leq \frac{\Delta x}{ |V_{\textup{mesh}}^{n}|+V_{waves}^{n}}.
\end{equation}

These two hypothesis insure that any wave created at time $t^{n}$ inside the $j$-th cell can only cross the right interface $x=x_{j+1/2}^{n}+V_{\textup{mesh}}^{n} t$ if $V_{\textup{mesh}}^{n}<0$ and the left interface $x=x_{j-1/2}^{n}+V_{\textup{mesh}}^{n} t$ if $V_{\textup{mesh}}^{n}>0$. It also imply that if two waves interact inside the $j$-cell during the time step, the waves resulting from their interaction will not have time to catch up the left or the right interfaces. This is depicted on Figure~\ref{FMM}. Thus, computing the flux along the $j+1/2$-th interface $x=x_{j+1/2}^{n}+V_{\textup{mesh}}^{n} t$ boils down to compute the time at which the nonclassical shock reconstructed in cell $j$ crosses the interface if $V_{\textup{mesh}}^{n}<0$  (in cell $j+1$ otherwise). On Figure~\ref{FMM}, we also see that if no reconstruction is performed in cells $j$ and $j+1$, the flux is trivial to compute and is given by a simple left or right decentering (depending on the sign of $V_{\textup{mesh}}^{n}$), which exactly corresponds to the staggered Lax--Friedrichs flux.
\begin{psfrags}
 \psfrag{xj-1/2}{$x_{j-1/2}^{n}$}
 \psfrag{xj+1/2}{$x_{j+1/2}^{n}$}
 \psfrag{tn}{$t^{n}$}
 \psfrag{tn+1}{$t^{n+1}$}
 \psfrag{j-1}{$j-1$}
 \psfrag{j}{$j$}
 \psfrag{T}{$T_{j+1/2}^{n}$}
\begin{figure}
 \includegraphics[width=0.9\linewidth]{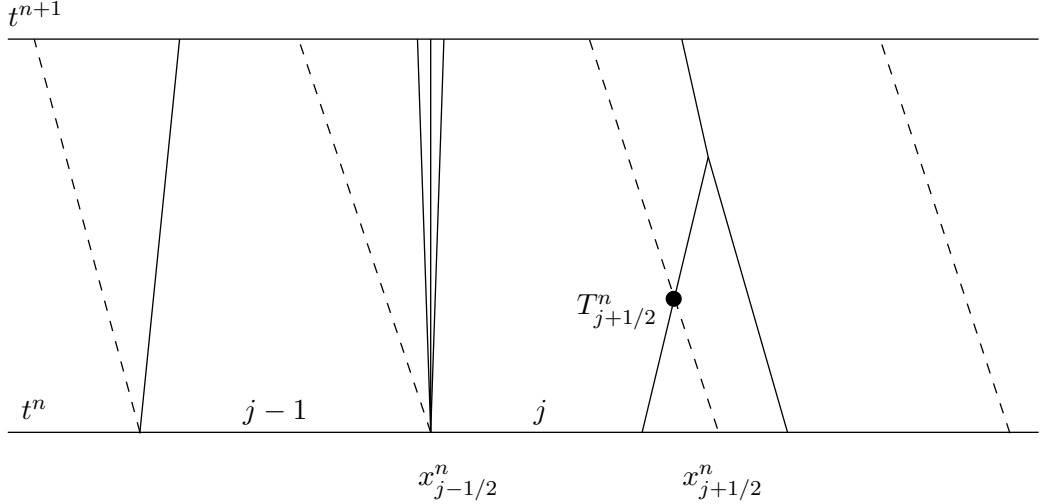}
 \caption{Computing the flux through the dashed interfaces can be done without solving waves interactions (interface $j+1/2$), and is trivial when no reconstruction is performed (cell $j-1$)} \label{FMM}
\end{figure}
\end{psfrags}

We now compute the flux in details, when $V_{\textup{mesh}}^{n}$ is negative and verifies~\eqref{eq:Vmesh}, like in Figure~\ref{FMM}.  Then the nonclassical shock reconstructed in cell $j$ may only cross the right interface $x=x_{j+1/2}^{n}+V_{\textup{mesh}}^{n} (t-t^n)$. The discontinuity is not located at the same place for the variables $v$ and $w$, therefore we compute two different crossing times
$$
T_{j+1/2}^{w, n}= \frac{ \Delta x - d_{j-1}^{n,w}}{s_{j-1}^{n}-V_{\textup{mesh}}^{n}} 
\qquad    \text{ and } \qquad T_{j+1/2}^{w, n}= \frac{\Delta x -d_{j-1}^{n,w}}{s_{j-1}^{n}-V_{\textup{mesh}}^{n}}   \qquad \text{ if } \ V_{\textup{mesh}}^{n}<0.
$$
Once again, Condition~\eqref{eq:Vmesh} insures that the flux passing through the $j+1/2$ interface only comes from waves arriving from the cell $j$. It can be clearly seen on Figure~\ref{FMM} that it is piecewise constant, given by
\begin{equation}  \label{eq:fj+1/2} 
\begin{cases}
  f_{j+1/2}^{n,v}= & (-\sigma(w_{j,R}^{n})-V_{\textup{mesh}}^{n} v_{j,R}^{n}) \min(\Delta t^{n}, T_{j+1/2}^{n,v}) \\
		&  +  (-\sigma(w_{j,L}^{n})-V_{\textup{mesh}}^{n} v_{j,L}^{n}) (\Delta t^{n}-\min(\Delta t^{n}, T_{j+1/2}^{n,v})) \\
  f_{j+1/2}^{n,w}= & (-v_{j,R}^{n}-V_{\textup{mesh}}^{n} w_{j,R}^{n}) \min(\Delta t^{n}, T_{j+1/2}^{n,w}) \\
		&  +  (-v_{j,L}^{n}-V_{\textup{mesh}}^{n} w_{j,L}^{n}) (\Delta t^{n}-\min(\Delta t^{n}, T_{j+1/2}^{n,w}))
\end{cases}
\qquad \text{ if } \ V_{\textup{mesh}}^{n}<0
\end{equation}
where we denote by $\Delta t^{n}$ the $n$-th time step $t^{n+1}-t^{n}$. 

When $V_{\textup{mesh}}^{n}$ is negative, the reasoning is the same but the shock reconstructed in the $j$-th cell now crosses the left interface $x=x_{j-1/2}^{n}+V_{\textup{mesh}}^{n} (t-t^n)$. The crossing times are given by
$$
T_{j-1/2}^{w, n}= \frac{d_{j}^{n,w}}{V_{\textup{mesh}}^{n}-s_{j}^{n}} 
\qquad    \text{ and } \qquad T_{j-1/2}^{w, n}= \frac{d_{j}^{n,w}}{V_{\textup{mesh}}^{n}-s_{j}^{n}}   \qquad \text{ if } \ V_{\textup{mesh}}^{n}>0.
$$
and the fluxes are
\begin{equation}  \label{eq:fj-1/2}
\begin{cases}
  f_{j-1/2}^{n,v}= & (-\sigma(w_{j,L}^{n})-V_{\textup{mesh}}^{n} v_{j,L}^{n}) \min(\Delta t^{n}, T_{j-1/2}^{n,v}) \\
		&  +  (-\sigma(w_{j,R}^{n})-V_{\textup{mesh}}^{n} v_{j,R}^{n}) (\Delta t^{n}-\min(\Delta t^{n}, T_{j-1/2}^{n,v})) \\
  f_{j-1/2}^{n,w}= & (-v_{j,L}^{n}-V_{\textup{mesh}}^{n} w_{j,L}^{n}) \min(\Delta t^{n}, T_{j-1/2}^{n,w}) \\
		&  +  (-v_{j,R}^{n}-V_{\textup{mesh}}^{n} w_{j,R}^{n}) (\Delta t^{n}-\min(\Delta t^{n}, T_{j-1/2}^{n,w}))
\end{cases}
\qquad \text{ if } \ V_{\textup{mesh}}^{n}>0
\end{equation}

\begin{rem}
 When no reconstruction is performed, the fluxes~\eqref{eq:fj-1/2} and~\eqref{eq:fj+1/2} coincide with the staggered Lax--Friedrichs fluxes, i.e., to a simple left or right decentering depending on the sign of $V_{\textup{mesh}}^{n}$. The values assigned to the distances $d_{j}^{n,v}$ and $d_{j}^{n,w}$ and to the speed $s_{j}^{n}$ (and thus to the crossing times $T_{j+1/2}^{n,v}$ and $T_{j+1/2}^{n,w}$) are of no importance. 
\end{rem}
\begin{rem}
 The idea of using a moving mesh to simplify the computation of the fluxes goes back to the Lax--Friedrichs scheme. 
 The Rusanov scheme follows the same idea by using a local mesh speed $V_{\textup{mesh}}^{j,n} $ to make the scheme less diffusive. Higher order extensions based on piecewise polynomial reconstructions are possible, see for example~\cite{NT90}.
\end{rem}

\subsection{Detection of classical shocks}
It is also possible to detect and reconstruct shocks in which $w$ does not change sign. Those shocks are always classical. Their detection is based on the following proposition.
\begin{prop} \label{PDetectC}
 If the states $(v_{L},w_{L})$ and $(v_{R},w_{R})$ are linked by a $1$-shock in which $w$ does not change sign, then either
 $$w_{R}<w_{L}\leq 0 \ \text{ and } \ v_{R}<v_{L} $$
 or 
 $$ 0 \leq w_{L}< w_{R} \ \text{ and } \ v_{R}>v_{L}. $$
  If they are linked by a $2$-shock in which $w$ does not change sign, then either
$$w_{L}<w_{R} \leq 0 \ \text{ and } \ v_{R}<v_{L} $$
or
 $$ 0 \leq w_{R}< w_{L} \ \text{ and } \ v_{R}>v_{L}.$$
\end{prop}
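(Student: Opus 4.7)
The plan is to derive both claims directly from the Rankine--Hugoniot relations~\eqref{eq:RHElasto} together with the classification of classical $1$- and $2$-shocks given in Propositions~\ref{p:FL1} and~\ref{p:BL2}. No study of the entropy inequality is needed; everything follows from a sign analysis.

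First I would handle the $1$-shock case. The second line of~\eqref{eq:RHElasto} with $i=1$ gives
$$ v_{R}-v_{L} = s(w_{L},w_{R})\,(w_{R}-w_{L}), $$
and since $s(w_{L},w_{R}) > 0$ by~\eqref{eq:shockspeed} (recall $\sigma'>0$), the sign of $v_{R}-v_{L}$ coincides with the sign of $w_{R}-w_{L}$. Next, I would invoke Proposition~\ref{p:FL1}: among $1$-waves linking $(v_{L},w_{L})$ to $(v_{R},w_{R})$, those which are a single classical shock in which $w$ does not change sign correspond exactly to the regime $w_{L} w_{R} > w_{L}^{2}$, i.e.\ $w_{L}(w_{R}-w_{L}) > 0$. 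This splits into two exhaustive subcases: either $w_{L}>0$ and $w_{R}>w_{L}$, which yields $0 \leq w_{L} < w_{R}$ together with $v_{R}>v_{L}$; or $w_{L}<0$ and $w_{R}<w_{L}$, which yields $w_{R}<w_{L}\leq 0$ together with $v_{R}<v_{L}$. The degenerate endpoint $w_{L}=0$ is picked up by taking the closed inequality in each branch and noting that in that limit $w_R$ still carries the appropriate sign, so the non-strict statements in the proposition are preserved.

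The $2$-shock case is entirely symmetric. Now~\eqref{eq:RHElasto} with $i=2$ gives $v_{R}-v_{L} = s(w_{L},w_{R})(w_{L}-w_{R})$, so the sign of $v_{R}-v_{L}$ is the sign of $w_{L}-w_{R}$. Proposition~\ref{p:BL2}, applied with $(v_{M},w_{M})=(v_{L},w_{L})$ and right state $(v_{R},w_{R})$, tells us that a single classical $2$-shock in which $w$ does not change sign occurs exactly when $w_{R}(w_{L}-w_{R})>0$. The same bifurcation as before then yields the two announced subcases $w_{L}<w_{R}\leq 0$, $v_{R}<v_{L}$ and $0\leq w_{R}<w_{L}$, $v_{R}>v_{L}$.

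There is no real obstacle in this proof; the arguments are purely algebraic sign manipulations once the correct classification results from Section~1 are invoked. The only mild point of care is verifying that the classical-shock regime identified in Propositions~\ref{p:FL1} and~\ref{p:BL2} is indeed equivalent to ``$w$ does not change sign and the shock is not a rarefaction or nonclassical'', so that the implications stated in Proposition~\ref{PDetectC} (which go \emph{from} the shock being classical and sign-preserving \emph{to} the inequalities) are covered by those earlier structural results. Once this is noted, everything reduces to the two-line sign check above.
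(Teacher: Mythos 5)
Your proof is correct: the paper states Proposition~\ref{PDetectC} without proof, and your argument (the sign identity $v_R-v_L=(-1)^{i+1}s(w_L,w_R)(w_R-w_L)$ from the second Rankine--Hugoniot relation combined with the classical-shock regime $w_Lw_R>w_L^2$, resp.\ $w_Rw_L>w_R^2$, from Propositions~\ref{p:FL1} and~\ref{p:BL2}) is exactly the sign analysis the paper implicitly relies on, in the same spirit as its proof of Proposition~\ref{PDetectPT}. The only point worth tightening is the degenerate endpoint $w_L=0$ (resp.\ $w_R=0$), where the quadratic criterion collapses to $0>0$ and one should instead argue directly from the entropy/Liu characterization that the shock remains admissible; your closed-inequality remark covers the stated conclusion but glosses over this.
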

There is no conflict with the detection of nonclassical shocks of Proposition~\ref{PDetectPT}, and we can straightforwardly extend Definitions~\ref{DDesired} and~\ref{DRecStates} to take into account those shocks.

\section{Exact approximation of isolated nonclassical shocks}
The aim of this section is to prove that the scheme described above is exact when the initial data is an isolated nonclassical shock, i.e.~\eqref{eq:RPElasto} with left state $(v_{L},w_{L})$ and right state $(v_{R},w_{R})$ verifying the Rankine--Hugoniot relation~\eqref{eq:RHElasto} and constrained by the kinetic relation~\eqref{eq:KR}. We recall that in that case the exact solution is
 $$ 
\begin{cases}
 v^{\textup{exa}}(t,x)= v_{L} \mathbf{1}_{x<(-1)^{i} s(w_{L},w_{R})t} + v_{R} \mathbf{1}_{x> (-1)^{i}s(w_{L},w_{R})t} \\
 w^{\textup{exa}}(t,x)= w_{L} \mathbf{1}_{x< (-1)^{i} s(w_{L},w_{R})t} + w_{R} \mathbf{1}_{x>(-1)^{i} s(w_{L},w_{R})t} 
\end{cases}
 $$
 with $i=1$ for a $1$-shock and $i=2$ for a $2$-shock.

\begin{thm} \label{TExa} The scheme described in the previous section is exact when the initial data is a single nonclassical shock. In other words, the numerical solution is the $L^{1}$-projection on the mesh of the exact solution at time $t^{n}$: for all $n \geq 0$ and for all $j \in \Z$,
$$v_{j}^{n}=\frac{1}{\Delta x} \int_{x_{j-1/2}^{n}}^{x_{j+1/2}^{n}} v^{\textup{exa}}(t^{n},x) \, dx \ \text{ and } \ w_{j}^{n}= \frac{1}{\Delta x} \int_{x_{j-1/2}^{n}}^{x_{j+1/2}^{n}} w^{\textup{exa}}(t^{n},x) \, dx .$$
\end{thm}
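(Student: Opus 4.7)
I would argue by induction on $n$. The base case $n=0$ is immediate from~\eqref{eq:inisamp}, which defines $(v_j^0,w_j^0)$ to be precisely the $L^1$-averages of $(v^0,w^0)$ on the initial cells. For the inductive step, suppose that at time $t^n$ the numerical solution is the $L^1$-projection of $(v^{\textup{exa}},w^{\textup{exa}})(t^n,\cdot)$. Since the exact solution consists of a single shock travelling at velocity $\tilde s=(-1)^i s(w_L,w_R)$, this projection has a very rigid structure: writing $x^\ast$ for the shock position at time $t^n$ and $j_0$ for the unique cell containing $x^\ast$, one has $(v_j^n,w_j^n)=(v_L,w_L)$ for $j<j_0$, $(v_j^n,w_j^n)=(v_R,w_R)$ for $j>j_0$, and
$(v_{j_0}^n,w_{j_0}^n)=\alpha(v_L,w_L)+(1-\alpha)(v_R,w_R)$ with $\alpha=(x^\ast-x_{j_0-1/2}^n)/\Delta x\in[0,1]$.

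Next I would analyse cell by cell what the detection-reconstruction step produces. In cell $j_0$, the neighbours $(v_{j_0\pm 1}^n,w_{j_0\pm 1}^n)$ are exactly $(v_L,w_L)$ and $(v_R,w_R)$ and are linked by the prescribed nonclassical shock, so Proposition~\ref{PDetectPT} gives the sign conditions of Definition~\ref{DDesired} and Proposition~\ref{p:recexa} then yields $(\bar v_{j_0,L}^n,\bar w_{j_0,L}^n)=(v_L,w_L)$, $(\bar v_{j_0,R}^n,\bar w_{j_0,R}^n)=(v_R,w_R)$ and $d_{j_0}^{n,v}=d_{j_0}^{n,w}=\alpha\Delta x\in(0,\Delta x)$. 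Thus the shock is replaced in cell $j_0$ at its exact position $x^\ast$. For cells $j$ with $|j-j_0|\ge 2$ both neighbours are equal and all the sign conditions of Definition~\ref{DDesired} fail, so no reconstruction occurs. The delicate case, which I expect to be the main obstacle, is cells $j_0\pm 1$: even though one neighbour equals the intermediate cell value $(v_{j_0}^n,w_{j_0}^n)$, I would show that either the sign test fails or, when it does not, the desired left/right states $(\bar v^n,\bar w^n)$ straddle the segment $[(v_L,w_L),(v_R,w_R)]$ in such a way that $(v_j^n,w_j^n)$ is not a convex combination of them, forcing $d_j^{n,v}$ or $d_j^{n,w}$ out of $(0,\Delta x)$; by Definition~\ref{DRecStates} the reconstruction is then cancelled. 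This step needs a careful case analysis that exploits monotonicity of $\Phi^{\flat,i}$ and the fact that $(v_{j_0}^n,w_{j_0}^n)$ lies on the linear segment connecting the two shock states.

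The final step is to verify that the fluxes~\eqref{eq:fj+1/2}--\eqref{eq:fj-1/2} coincide with the exact fluxes along the moving interfaces. Assuming WLOG $V^n_{\textup{mesh}}<0$, the shock velocity computed by the scheme in cell $j_0$ is $s_{j_0}^n=(v_R-v_L)/(w_L-w_R)$, which by~\eqref{eq:RHElasto} is exactly $\tilde s$. Solving $x_{j_0+1/2}^n+V^n_{\textup{mesh}}T=x^\ast+\tilde s\,T$ gives $T=(1-\alpha)\Delta x/(\tilde s-V^n_{\textup{mesh}})$, which is precisely $T_{j_0+1/2}^{n,v}=T_{j_0+1/2}^{n,w}$ as defined in Section~\ref{SAdv}; the CFL condition~\eqref{eq:CFL} and the mesh-speed condition~\eqref{eq:Vmesh} ensure this is the only wave crossing any moving interface during $[t^n,t^{n+1}]$. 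On all other interfaces the exact solution is constant on the upstream side, and the reconstruction-free formulas coincide with the upwind staggered Lax--Friedrichs flux, which is again exact. Substituting these exact fluxes into~\eqref{eq:FVS} and recognising the right-hand side as the integral form of~\eqref{eq:NLelasticity} on the moving control volumes $\{(t,x):\,t^n\le t\le t^{n+1},\,x_{j\pm 1/2}^n+V^n_{\textup{mesh}}(t-t^n)\}$ identifies $(v_j^{n+1},w_j^{n+1})$ with the $L^1$-average of $(v^{\textup{exa}},w^{\textup{exa}})(t^{n+1},\cdot)$ on the new cell $[x_{j-1/2}^{n+1},x_{j+1/2}^{n+1}]$, which closes the induction.
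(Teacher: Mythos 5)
Your plan follows the paper's proof essentially step for step: induction on $n$, exact detection and placement of the shock in its cell via Propositions~\ref{PDetectPT} and~\ref{p:recexa}, cancellation of any spurious reconstruction in the other cells, and exact computation of the fluxes on the moving mesh. The one caveat is that the step you correctly single out as the main obstacle---ruling out a reconstruction in the two cells adjacent to the shock cell---is precisely where the paper invests all of its effort, combining the monotonicity of $\Phi^{\flat,1}$ with the monotonicity of $v$ along the $1$-wave curve (Theorem~4.5 of~\cite{LT01}) and the Rankine--Hugoniot relations to show that a conservative placement $d_{j}^{n,v}\in(0,\Delta x)$ is impossible there; your sketch names the right ingredients but leaves that case analysis to be carried out.
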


\begin{proof} We prove the result for a $1$-nonclassical shock such that $w_{L}<0<w_{R}$, the other cases being exactly similar. With the initial sampling~\eqref{eq:inisamp}, the property holds true at time~$t^{0}$. Suppose that is holds true at some time $t^{n}$. At this time, we renumbered the cells in a way that the discontinuity lies inside the cell numbered $0$, and we denote by $\delta$ its distance to $x_{-1/2}^{n}$. We have
  $$ v_{j}^{n}= 
\begin{cases}
 v_{L} & \text{ if } j<0, \\
 \frac{\delta}{\Delta x} v_{L}  + \frac{\Delta x -\delta}{\Delta x} v_{R} & \text{ if } j=0, \\
 v_{R} & \text{ if } j>1, \\
\end{cases}
\ \text{ and} \
 w_{j}^{n}= 
\begin{cases}
 w_{L} & \text{ if } j<0, \\
 \frac{\delta}{\Delta x} w_{L}  + \frac{\Delta x -\delta}{\Delta x} w_{R} & \text{ if } j=0, \\
 w_{R} & \text{ if } j>1. \\
\end{cases}
 $$
Note that as $w_{L}<w_{R}$ and $v_{R}>v_{L}$, we have $w_{-1}^{n}<w_{0}^{n}<w_{1}^{n}$ and   $v_{-1}^{n}<v_{0}^{n}<v_{1}^{n}$. The detection step of Proposition~\ref{PDetectPT} detects a $1$-nonclassical shock in cell $0$. By Proposition~\ref{p:recexa}, the nonclassical shock is reconstructed in that cell, and it is be placed exactly at the right position in the reconstruction step:
$$(v_{0,L}^{n}, w_{0,L}^{n})= (v_{L}, w_{L}), \quad (v_{0,R}^{n}, w_{j,R}^{n})=(v_{R}, w_{R})  \quad \text{ and } \quad d_{j}^{0,v}=d_{j}^{0,w}= \delta.$$
If no reconstruction is performed in the other cells, our scheme gives the correct solution at time $t^{n+1}$ by construction. This is easy to check, but a little tedious, and we refer the reader to~\cite{A14} for a detailed proof.  

Suppose that $w_{0}^{n}>0$. Then by Proposition~\ref{PDetectC}, no classical shock is detected in cell $1$, and by Proposition~\ref{PDetectPT}, a $1$-nonclassical shock is detected in cell $-1$. Let us focus on the solution Riemann problem between the state $(v_{L},w_{L})$ in cell $-2$ and the state $(v_{0}^{n}, w_{0}^{n})$ in cell $0$. The notation are recalled on Figure~\ref{FRec-1}. If  $\bar{w}_{-1,L}^{n} \geq w_{L}$, both $\bar{w}_{-1,L}^{n}$ and $\bar{w}_{-1,R}^{n}$ are larger than $w_{L}$. Thus it is be impossible to have $d_{-1}^{n,w} \in (0, \Delta x)$ and no reconstruction occurs in cell $-1$. 
\begin{psfrags}
 \psfrag{wL}{$w_{L}$}
 \psfrag{wR}{$w_{R}$}
 \psfrag{w0}{$w_{0}^{n}$}
 \psfrag{w-1L}{$\bar{w}_{-1,L}^{n}$}
 \psfrag{w-1R}{$\bar{w}_{-1,R}^{n}$}
 \psfrag{w=0}{$w=0$}
 \psfrag{-1}{$-1$}
 \psfrag{0}{$0$}
 \psfrag{1}{$1$}
\begin{figure}
 \includegraphics[width=0.9\linewidth]{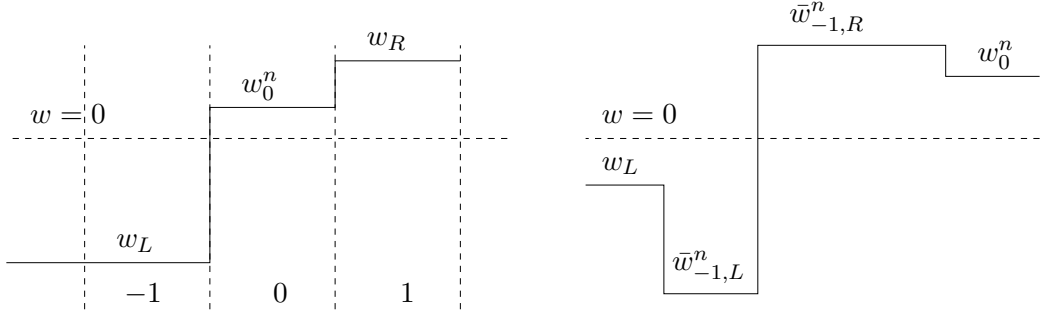}
 \caption{Structure of $w$ when a $1$-phase transition is detected in cell $-1$. In that case, the Riemann solution (on the right) contains three shocks. } \label{FRec-1}
\end{figure}
\end{psfrags}
 Suppose now that $w_{L}$ is larger than $\bar{w}_{-1,L}^{n}$. Then $w_{L}$ and $\bar{w}_{-1,L}^{n}$ are linked by a $1$-classical shock, and $\bar{w}_{-1,L}^{n}$ is linked to $\bar{w}_{-1,R}^{n}$ by a $1$-nonclassical shock. It is is impossible to have $d_{-1}^{n,v}$ in $(0,1)$. Indeed,  the kinetic function $\phi^{\flat, 1}$ decreases, thus
 $$ \bar{w}_{-1,R}^{n}= \phi^{-\flat, 1}( \bar{w}_{-1,L}^{n} ) >  \phi^{-\flat, 1}(w_{L}) = w_{R}  $$
and there is a classical shock between $(\bar{v}_{-1,R}^{n},\bar{w}_{-1,R}^{n})$ and $(v_{R},w_{R})$. The states $(\bar{v}_{-1,R}^{n},\bar{w}_{-1,R}^{n})$ and $(v_{R},w_{R})$ are both on the $1$-wave curve $\mathcal{W}_{1}^{F}(v_{L},w_{L})$. Theorem $4.5$ of~\cite{LT01}  states that along this curve, $v$ is an increasing function of $w$. Therefore $ \bar{v}_{-1,R}^{n}$ is larger than $v_{R}$. On the other hand, the Rankine--Hugoniot relations applied to the $2$-shock yields that $ \bar{v}_{-1,R}^{n}$ is smaller than $v_{-1}^{n}$, which contradicts the fact that $v_{-1}^{n}<v_{R}$.

The case where a $1$-nonclassical shock is detected in cell $1$ is simpler. We have $w_{0}^{n}<0$ and nothing is  is detected on cell $-1$. Moreover, $\bar{w}_{1,L}^{n} \leq \bar{w}_{1,R}^{n}$, thus it is not possible to reconstruct $w$ in a conservative manner if $ \bar{w}_{1,R}^{n} \leq w_{R}$. On the other hand, if $ \bar{w}_{1,R}^{n} > w_{R}$, the solution contains a $2$-classical shock and~\eqref{eq:RH} yields that $\bar{v}_{1,R}^{n} \leq v_{R}$. The Rankine--Hugoniot applied to the $1$-nonclassical shock implies that $\bar{v}_{1,L}^{n} \leq \bar{v}_{1,R}^{n}$. Thus it is impossible that $d_{1}^{n,v}$ belongs to $(0, \Delta x)$.
\end{proof}
\begin{rem}
 It is crucial to require that both $v$ and $w$ are reconstructed in a conservative manner. In the case where the nonclassical shock is detected on cell $1$, it is clear that the proof collapses if we authorize $d_{1}^{n,v}$ outside $(0,\Delta x)$. Moreover if we authorized $d_{1}^{n,v}$ outside $(0,\Delta x)$, the solution of the Riemann problem between $(v_{1}^{n},w_{1}^{n})$ and $(v_{R}, w_{R})$ might contain a $1$-classical shock, a $1$-nonclassical shock and $2$-rarefaction wave, in which case
 $$\bar{v}_{1,L}^{n} \leq v_{R} \leq  \bar{v}_{1,R}^{n} $$
 and a reconstruction is performed in cell $1$. 
\end{rem}

\section{Numerical Simulations} \label{RDouble}
For all the numerical simulations presented below, the stress function is $\sigma(w)= w^{3}+ mw$ with $m \geq 0$. In that case,
$$ \Phi^{\natural}(w)= -\frac{1}{2} w, \qquad \Phi^{\flat}_{\infty}(w)= -w \qquad \text{and} \qquad \Phi^{\sharp}(w,w')=-w-w'. $$
For the kinetic functions we take $\Phi^{\flat,1}(w)=\Phi^{\flat,2}(w)=-\beta w$, where $\beta$ belongs to $[-1/2,1]$. The case $\beta=1/2$ corresponds to the classical solutions ; the choice $\beta=1$ corresponds to the case where the entropy dissipation~\eqref{eq:SEI} is zero across nonclassical choice. It does not fall in the theory of~\cite{LeFloch}, but the Riemann problem can be solved (see~\cite{LT01}) and it is possible to explore that case numerically.

\subsection*{Test $1$: Isolated nonclassical shock}
This test case illustrates Theorem~\ref{TExa}. The initial data is the Riemann problem:
$$ 
\begin{cases}
 v^{0}(x)= -10* \mathbf{1}_{x<0} +110* \mathbf{1}_{x\geq0}, \\
 w^{0}(x)= -6* \mathbf{1}_{x<0} +9* \mathbf{1}_{x\geq0},
\end{cases}
$$
With $m=1$ and $\beta=2/3$, the solution is an isolated nonclassical $1$-shock. On the top of Figure~\ref{FIsolatedNC}, we plot the exact nonclassical solution and the solution given by the reconstruction scheme. As expected they are exactly the same. On the bottom of Figure~\ref{FIsolatedNC}, we plot the solution given by the Godunov scheme based on an exact \emph{nonclassical} Riemann solver. It does not capture the nonclassical solution but the classical one, which in that case in a rarefaction followed by a shock. This is a general phenomenon: usual finite volume schemes are not able to capture nonclassical solutions. Thus in the sequel we used the Glimm scheme to compare our scheme with. The CFL number is set to $0.45$, the final time is $T=0.038$ and the space interval $[-0.5, 0.5]$ is discretized with $200$ cells.
\begin{figure}[htp!]
 \includegraphics[width=\linewidth, clip=true, trim=2cm 0cm 2cm 0cm]{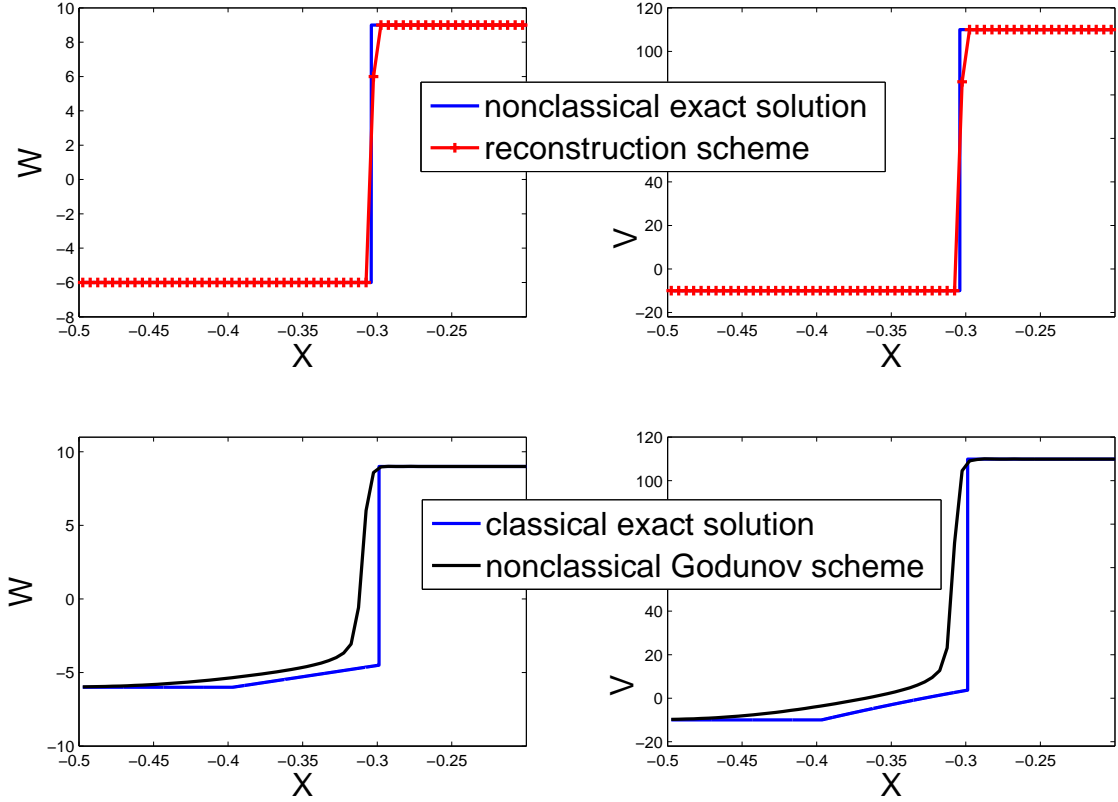}
 \caption{The nonclassical Godunov scheme is unable to capture nonclassical shocks. On the contrary, the reconstruction scheme captures isolated nonclassical shocks exactly.} \label{FIsolatedNC}
 \end{figure}
 
 \subsection*{Interlude: The Glimm scheme}
 In the sequel we use the random sampling method of Glimm~\cite{Glimm65} to compute reference solution. The Glimm scheme is built as follow. Let $(r^{n})_{n \in \N}$ be a sequence of i.i.d. random variables uniformly distributed over $[0,\Delta x]$. Suppose that at time $t^{n}$ a piecewise constant approximation of the solution is given, and denote by $(t,x) \mapsto U^{n}_{\textup{exa}}(t,x)$ the exact solution at time $t \geq 0$ for that initial data.   The numerical solution of Glimm at time $t^{n+1}=t^{n}+ \Delta t^{n}$ is given by
 $$ \forall j \in \Z, \ U_{j}^{n+1}= U^{n}_{\textup{exa}}(\Delta t^{n}, x_{j-1/2}^{n} + r^{n}). $$
 It $\Delta t^{n}$ is small enough, i.e. if it is smaller than the maximum of the waves speed appearing in the Riemann problems multiplied by the cell size $\Delta x$, $U^{n}_{\textup{exa}}$ is just the juxtaposition of  Riemann solutions at each interface. 
 
 The main feature of the Glimm scheme is that is does not introduce any numerical diffusion, in the sense that the shock profiles are not smeared out at all. This is why this scheme has been used in~\cite{CL03} to approximate nonclassical solutions. The two drawbacks are that the scheme is not conservative and that the exact computation of the solution is costly.

\subsection*{Test $2$: A Riemann problem with two nonclassical shocks}
The initial data is now
$$
 \begin{cases}
 v^{0}(x)= 6* \mathbf{1}_{x<0} -10* \mathbf{1}_{x\geq0}, \\
 w^{0}(x)= 1* \mathbf{1}_{x<0} +2* \mathbf{1}_{x\geq0}.
\end{cases}
$$
The exact solution consists in a $1$-classical shock, a $1$-nonclassical shock, a $2$-nonclassical shock and a $2$-rarefaction wave. On Figure~\ref{FFourWaves}, we plot the exact solution and the numerical solution given by the reconstruction schemes. We compare the reconstruction where we detect nonclassical shocks only (cf Proposition~\ref{PDetectPT}), referred to in the legends as $\texttt{RecNC}$, and the reconstruction scheme were the classical shocks are also detected (cf Proposition~\ref{PDetectC}), referred to in the legends as $\texttt{RecNC+C}$. Both of them capture very sharply the nonclassical shocks; the $1$-classical shock is much more diffused when it is not reconstructed, and both scheme behaves in the same way in the rarefaction wave.  The CFL number is $0.45$, the space interval $[-1, 1]$ contains $200$ cells.
\begin{figure}[h!tp]
 \includegraphics[width=\linewidth, clip=true, trim=2cm 0cm 2cm 0cm]{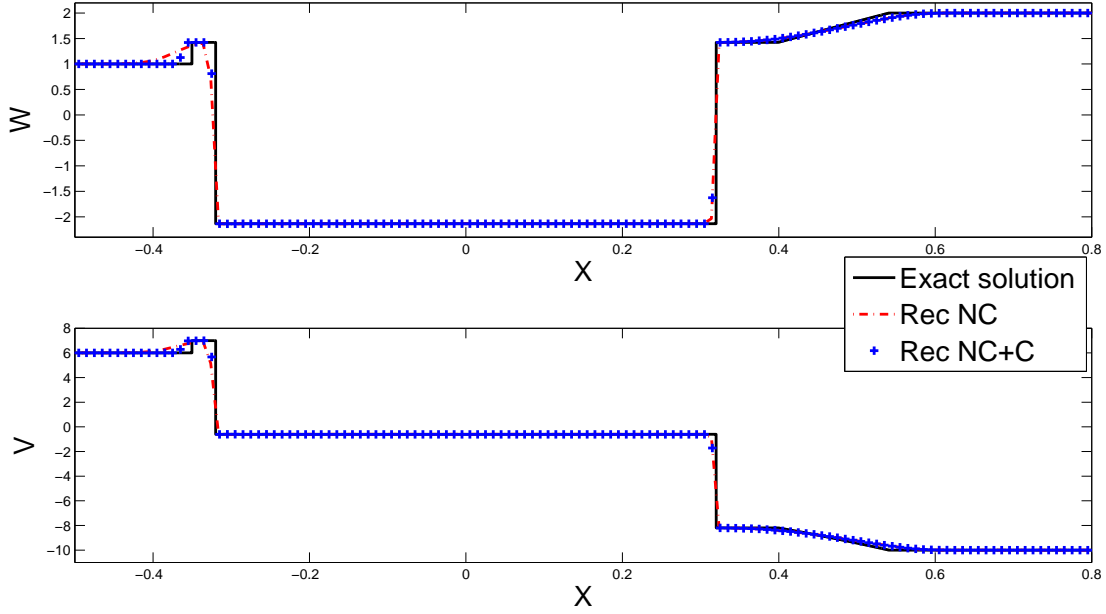}
 \caption{A Riemann problem with two nonclassical shocks at time $0.15$.} \label{FFourWaves}
 \end{figure}
 
\subsection*{Test $3$: Perturbation of a classical shock}
This test case is taken from~\cite{CL03}. We still have $\beta=2/3$ but now $m=2$. The initial data is 
$$ 
\begin{cases}
 v^{0}(x)= 1* \mathbf{1}_{x<0} -11* \mathbf{1}_{x\geq0} ,\\
 w^{0}(x)= (1+\eps)* \mathbf{1}_{x<0} -3* \mathbf{1}_{x\geq0}. 
\end{cases}
$$
with $\eps \in \{0, 0.05, 0.1\}$ When $\eps=0$, the solution is a $1$-classical shock. When $\eps>0$, the classical shock is split into a classical shock followed by a nonclassical shock. Their speeds are different but close to each other. 
We plot the solutions at time $T=0.4$ on Figure~\ref{FClassical}, using $600$ cells per unit interval and a CFL number of $0.45$. The solutions are very well approached when $\eps>0$. When $\eps  = 0$, a spike appears. This spike corresponds to the state linked to $(v_{R},w_{R})$ by a nonclassical shock (on Figure~\ref{FClassical} we see that it has the exact same height as the nonclassical shock appearing for $\eps>0$). Remark that $\eps=0$ is exactly the limit between the two cases in the last point of the enumeration of Proposition~\ref{p:FL1}, for which the speed of the nonclassical and the classical shock coincide (see also Figure~\ref{F:PhiDiese}).

Numerically the mechanism is the following. After one iteration in time, an intermediate value $(v_{i},w_{i})=(\alpha v_{L}+ (1-\alpha) v_{R}, \alpha w_{L}+ (1-\alpha) w_{R})$ is created. At the second iteration in time, the classical shock (in which $w$ changes sign), is perfectly reconstructed in the corresponding cell. However, a $1$-nonclassical shock is also detected by Proposition~\ref{PDetectPT} in the cell just before, and this time the reconstruction succeeds. Thus the scheme is not exact in that case. Note that the proof of Theorem~\ref{TExa} uses the kinetic relation and thus collapses when the reconstructed shock does not verify the kinetic relation (and is hence classical). 

This phenomena does not prevent the scheme from converging. The numerical solution has the same shape than in the case $\eps>0$: a classical shock followed by a nonclassical shock, but this time they have the same speed, thus they remain at the same position.  The spike is only two cells wide when the classical shocks are reconstructed, and hence not diffused. If they are not reconstructed, the spurious classical shock is diffused.

\begin{figure}[h!tp]
 \includegraphics[width=\linewidth, clip=true, trim=2cm 0cm 2cm 0cm]{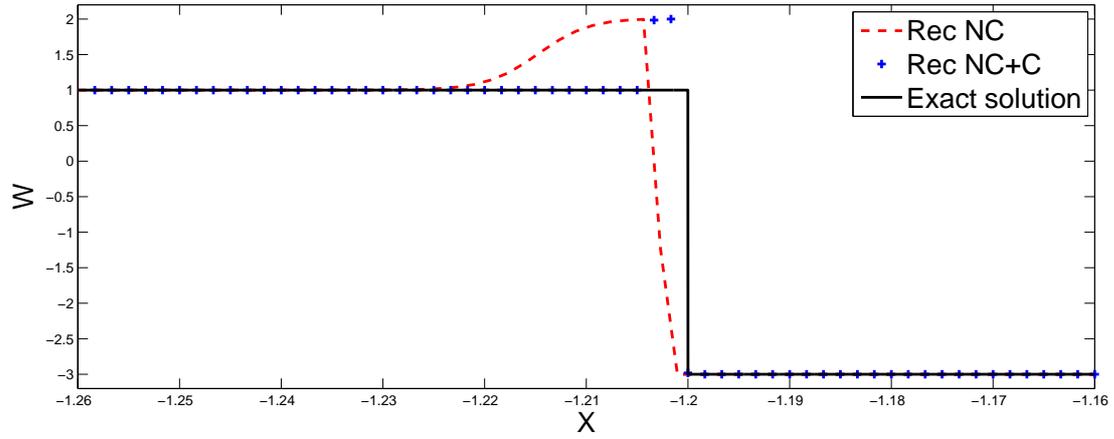}
 \includegraphics[width=\linewidth, clip=true, trim=2cm 0cm 2cm 0cm]{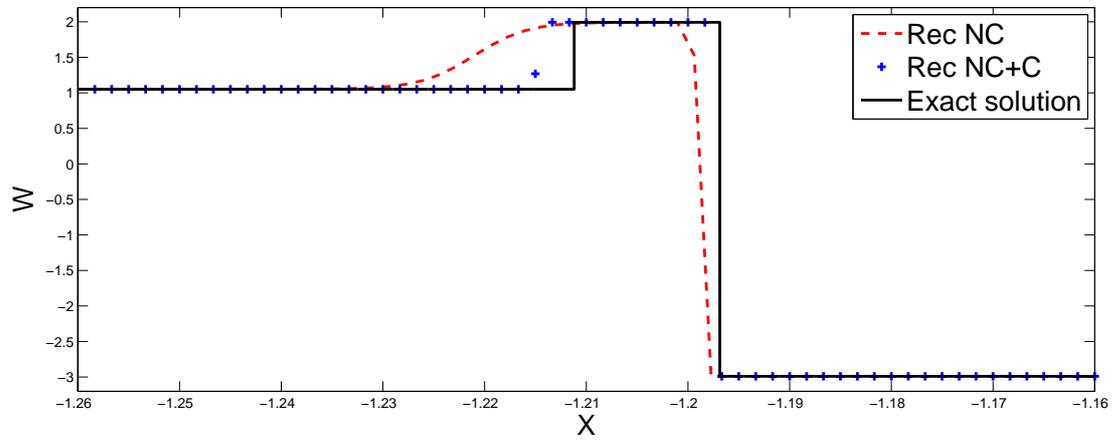}
 \includegraphics[width=\linewidth, clip=true, trim=2cm 0cm 2cm 0cm]{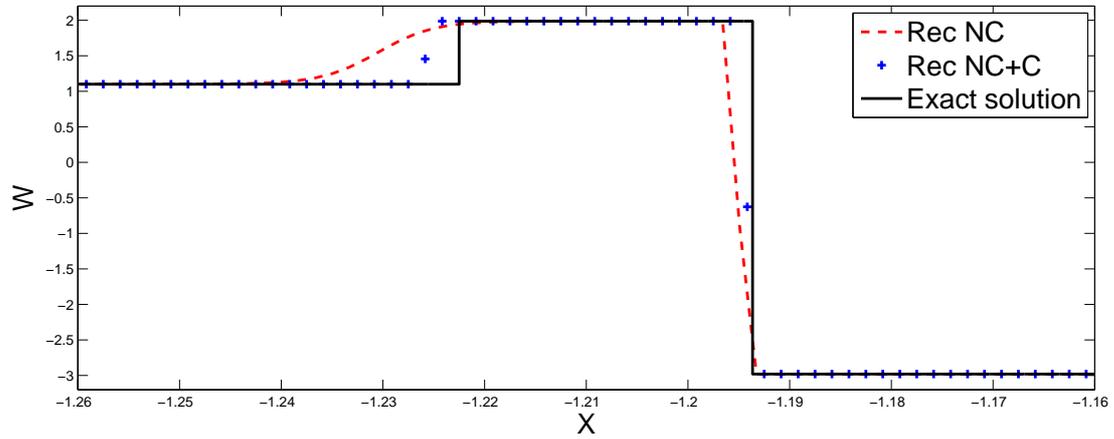}
 \caption{Perturbations of an isolated classical shock, with, from top to bottom, $\eps=0$, $\eps=0.05$ and $\eps=0.1$.} \label{FClassical}
 \end{figure}

In conclusion, this test case shows a limitation of scheme, namely its incapability to approach exactly classical shock in which $w$ changes sign, but also show its ability to capture finely nonclassical solutions, the gap between the shocks being very thin here. In particular, the apparition of the intermediate state is immediate, while it is linked to the ratio of the width of the gap at time $T$ and of the cell size $\Delta x$ when using a random sampling based method like the Glimm scheme.


%

\subsection*{Test $4$: Apparition of nonclassical waves from a smooth initial data}
We now focus on the case of a smooth initial data
$$
 \begin{cases}
 v^{0}(x)= 3 \sin(2 \pi x), \\
 w^{0}(x)= 1+3 \cos(8 \pi x),
\end{cases}
$$
with $\beta=0.95$ and $m=1$ and periodic boundary condition. Nonclassical shocks appear around time $t=0.011$ and then propagate in the solution. On Figures~\ref{F:PT15}, \ref{F:PT60} and~\ref{F:PT100} we compare the solution given by the reconstruction schemes (with or without reconstruction of classical shocks) at time $t=0.015$, $t=0.06$ and $t=1$. The space interval contains $1 \, 000$ cells per unit interval and CFL number is set to $0.45$. The reference solution is given by the Glimm scheme with $8 \, 000$ cells. We see that the reconstruction schemes capture accurately the nonclassical shocks. The result are poorer in smooth areas, because in those regions the reconstruction schemes behave as the Lax--Friedrichs scheme which is quite diffusive. This can be improved by using another scheme on interfaces where no reconstruction is performed. The use of a moving mesh encourages to chose a central scheme like the Nessyahu and Tadmor scheme~\cite{NT90}. This scheme is second order accurate in smooth regions of the solutions. It is both easy to implement and fast, as it does not use any information on the Riemann problems.
\begin{figure}[h!tp]
 \includegraphics[width=\linewidth, clip=true, trim=2cm 1cm 2cm 1cm]{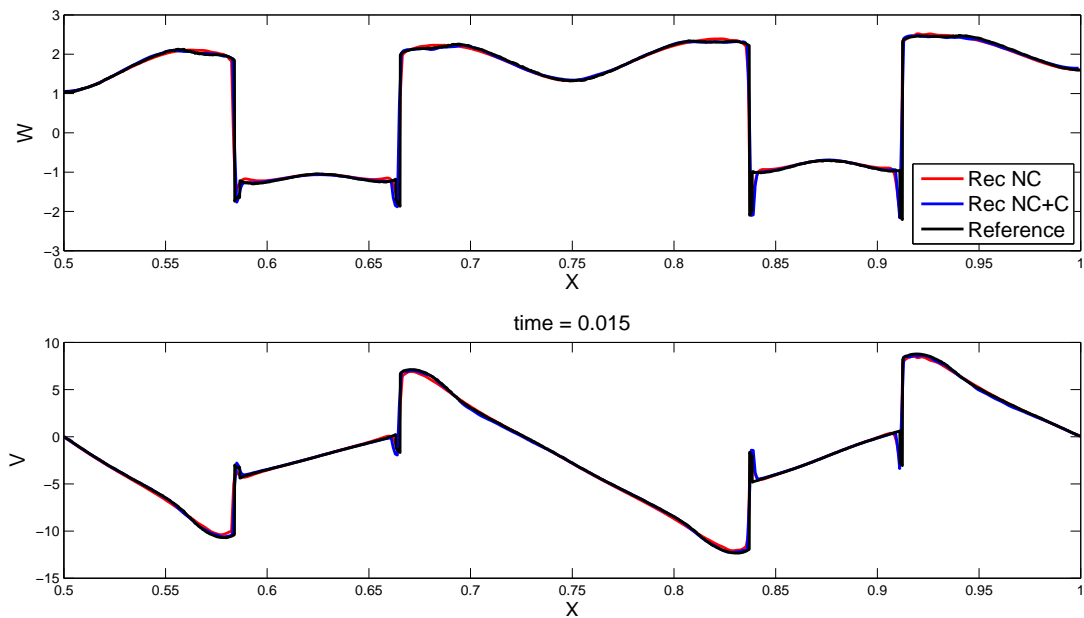}
 \caption{Solution of test $4$ at time $t=0.015$. } \label{F:PT15}
\end{figure}
\begin{figure}[h!tp]
 \includegraphics[width=\linewidth, clip=true, trim=2cm 1cm 2cm 1cm]{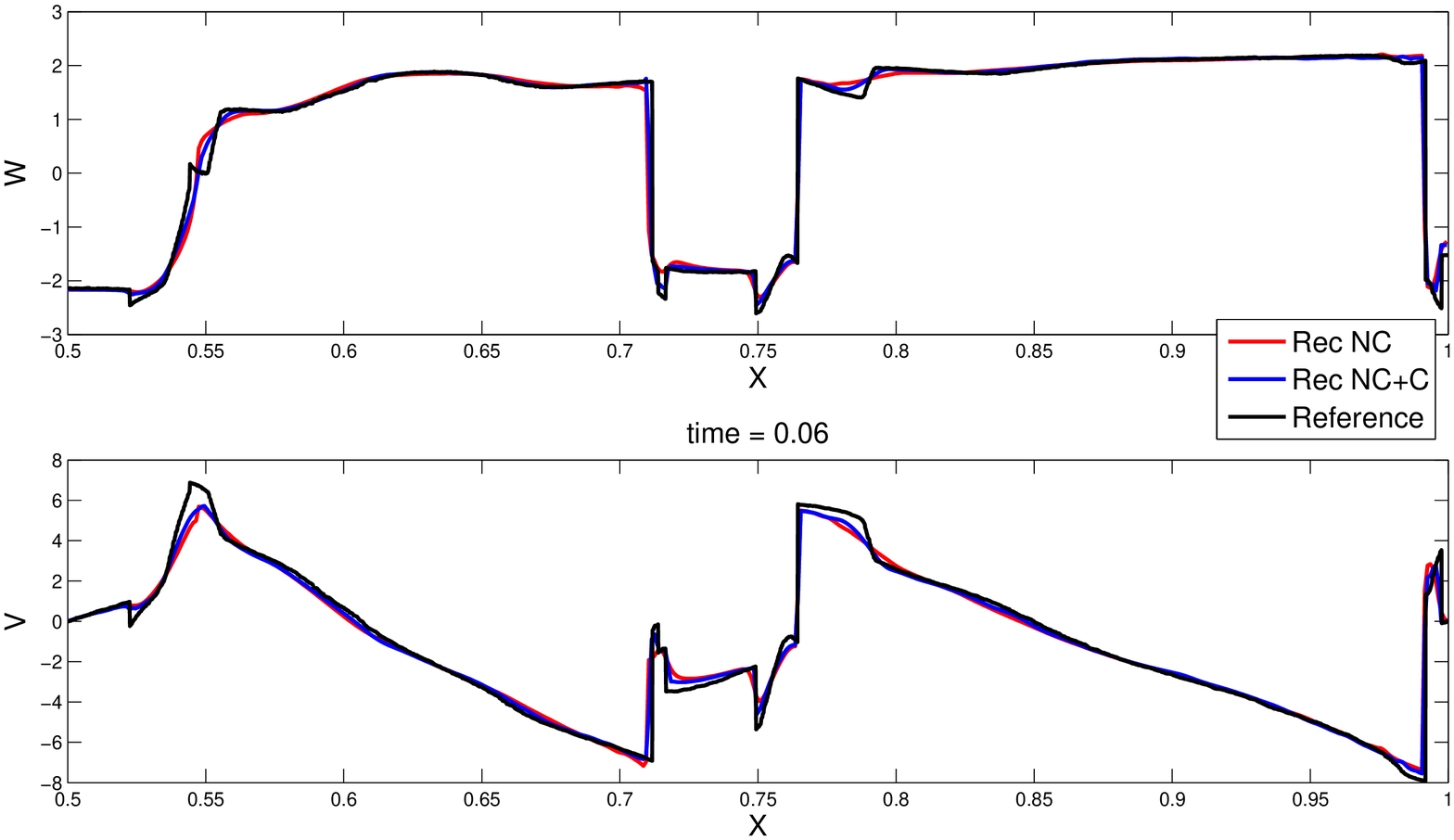}
 \caption{Solution of test $4$ at time $t=0.06$. } \label{F:PT60}
\end{figure}
\begin{figure}[h!tp]
 \includegraphics[width=\linewidth, clip=true, trim=2cm 1cm 2cm 1cm]{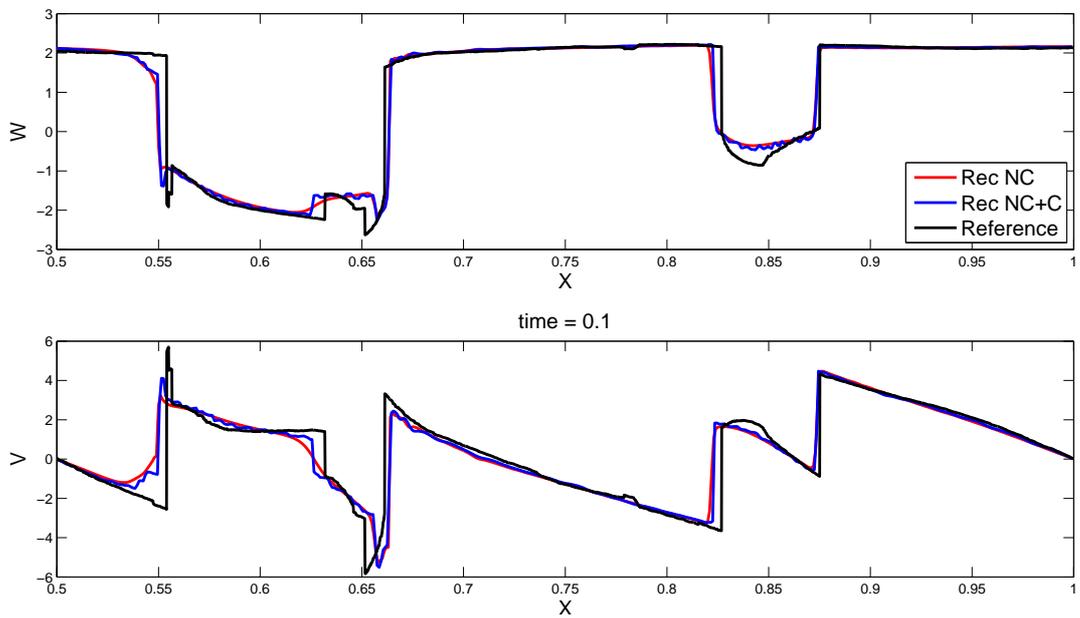}
 \caption{Solution of test $4$ at time $t=0.1$. } \label{F:PT100}
\end{figure}

\subsection*{Test $5$: Long time simulation in the maximal dissipative case}
We reproduce here the test case of~\cite{CL03} for which $\beta=1$ and the final time is large. The case $\beta=1$ corresponds to the case where the entropy dissipation is zero through nonclassical shocks. It is a limit case in the theoretical framework of~\cite{LeFloch}. As the Riemann solver is perfectly defined for $\beta=1$, it can be explored numerically.

The initial data is $1$-periodic with
$$ (v^{0}(x), w^{0}(x))=
\begin{cases}
 (0.3, 0.4) & \text{ if } 0 \leq x < 0.3, \\
 (0.15, -0.2) & \text{ if } 0.3 \leq x < 0.3+2/3, \\
 (0.1, 0.4) & \text{ if } 0.3 +2/3 \leq x < 1.
\end{cases}
$$
Its mean value is null. The parameter $m$ is fixed to $0.05$. On Figure~\ref{F:LFT40}, we plot the solution at time $t=40$ with $2 \, 000$ and $8 \, 000$ points per unit interval. We can see that at that time $w$ changes sign three times, instead of two in the initial solution, and does not converge to zero.
\begin{figure}[h!tp]
 \includegraphics[width=\linewidth, clip=true, trim=2cm 8.4cm 2cm 1cm]{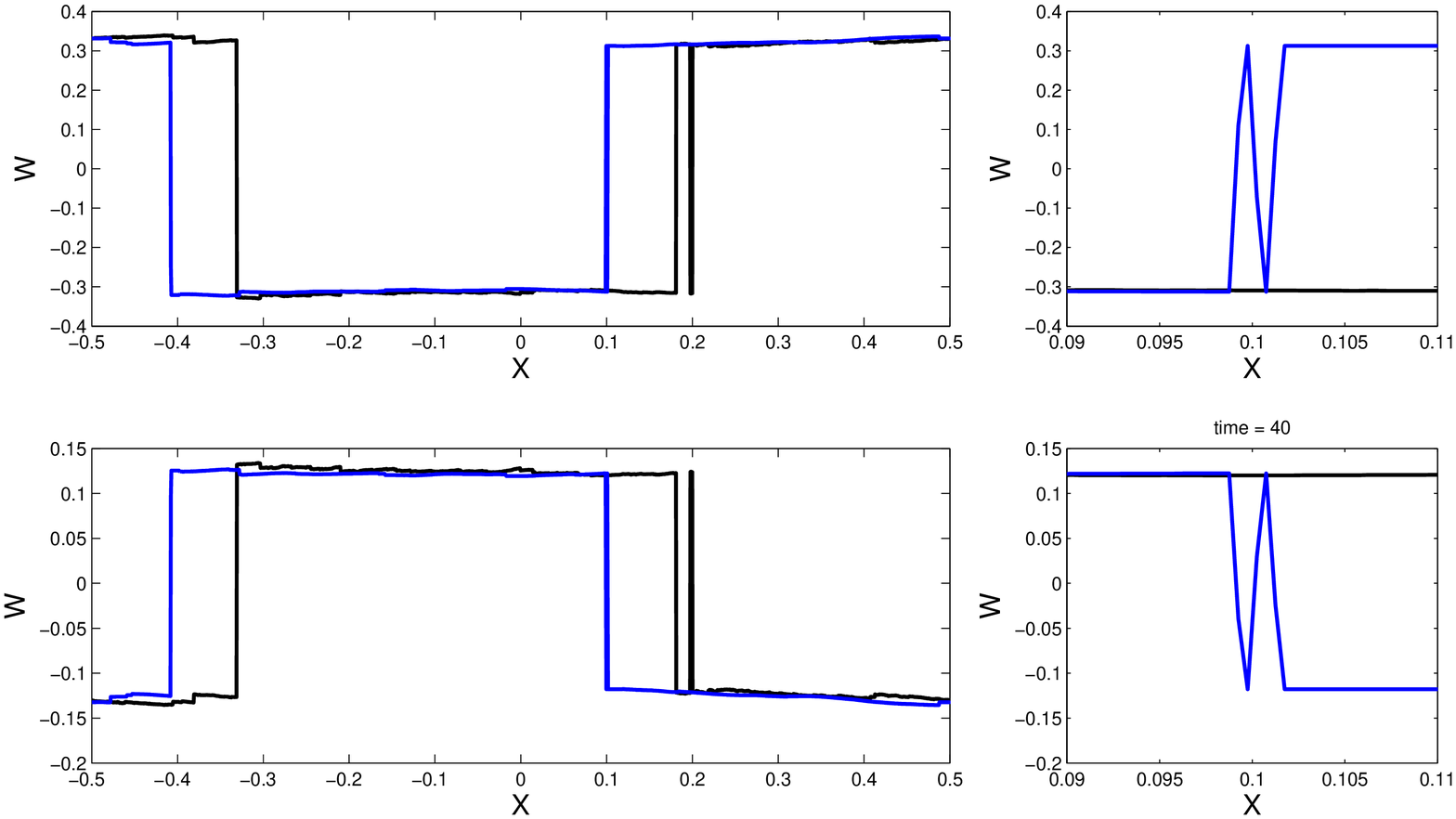}
 \includegraphics[width=\linewidth, clip=true, trim=2cm 8.4cm 2cm 1cm]{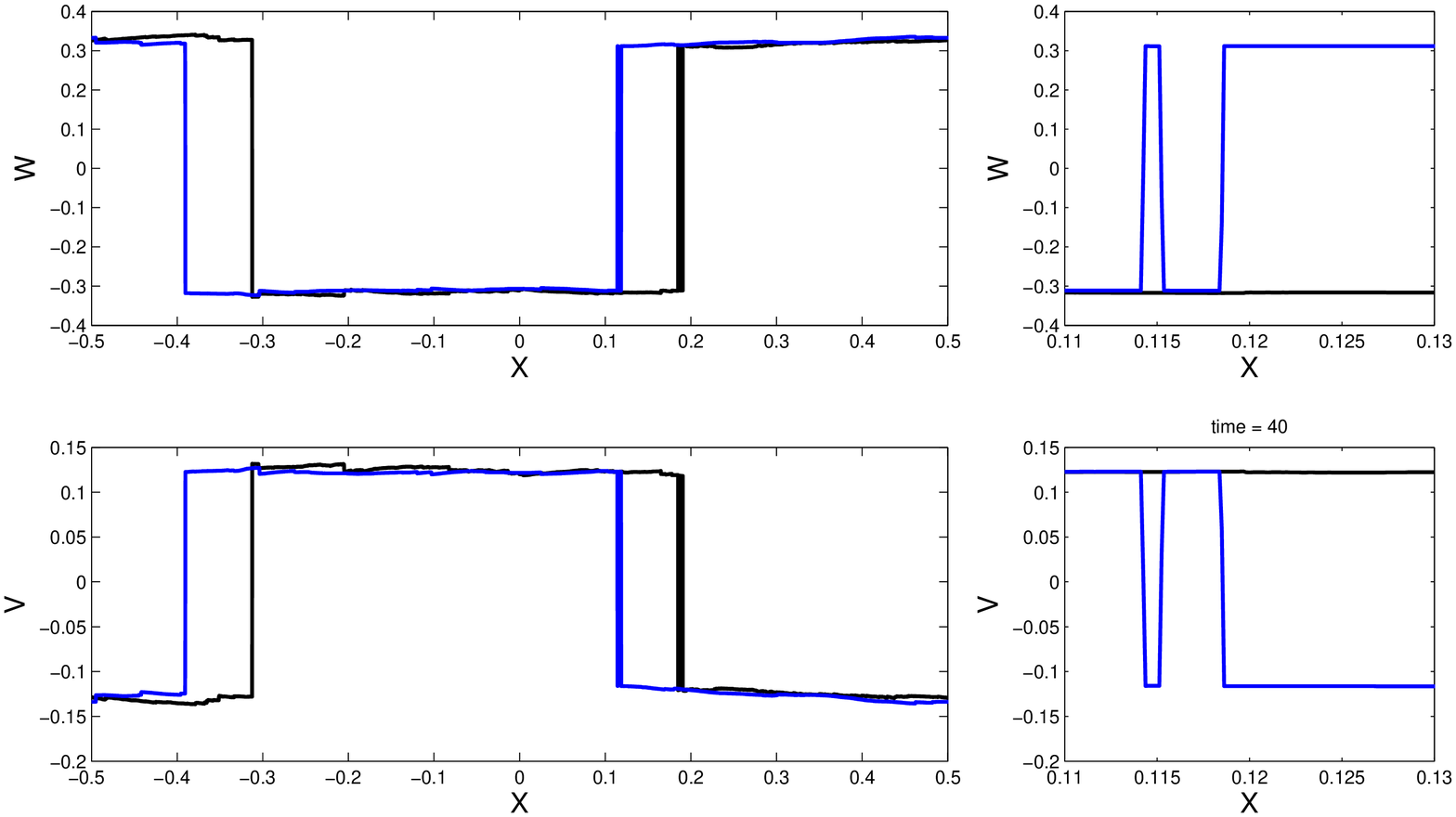}
 \caption{Solution of test $5$ at time $t=40$ (the shape of $v$ is similar) } \label{F:LFT40}
\end{figure}
The position of the nonclassical shocks are different with the two schemes. Let us recall that the Glimm scheme is based on a random sampling of the solution, thus two realizations of the same test give different results. On Figure~\ref{F:HistPos}, we plot the histogram of the first nonclassical shock position at time $t=20$ for $100$ independent realizations of the Glimm scheme with $2 \, 000$ cells (bottom) and the comparison of the two schemes at the same time (with $8 \, 000$ points). Moreover, the structure with $2$ nonclassical shocks very close to each other only appears in $31$ out of those $100$ realizations. Its indicates that the width of this structure is small (and probably smaller than the one appearing in the realization of~\cite{CL03}).
\begin{figure}[h!tp]
 \includegraphics[width=\linewidth, clip=true, trim=2cm 0cm 2cm 1cm]{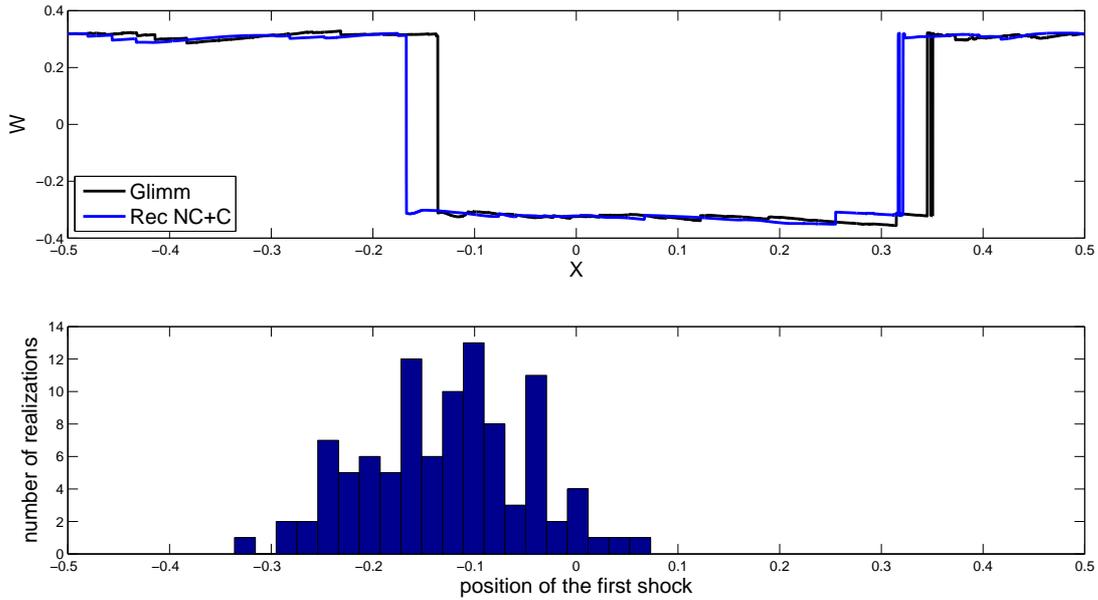}
 \caption{Solution of test $5$ at time $t=40$ (the shape of $v$ is similar) } \label{F:HistPos}
\end{figure}
\phantomsection
\addcontentsline{toc}{section}{References} 

\bibliographystyle{alpha}
\bibliography{biblioReconstructionElasto}
\end{document}